\documentclass[11pt]{amsart}

\usepackage{amsmath,amssymb,mathtools}
\usepackage[T1]{fontenc}
\usepackage{lmodern}
\usepackage{microtype}
\usepackage{tikz-cd}
\usepackage[colorlinks=true,linkcolor=blue,citecolor=blue,urlcolor=blue]{hyperref}
\hypersetup{
 pdftitle={Real‑analytic realization of universal Teichm\"uller spaces
 },
 pdfauthor={Tailiang Liu and Yuliang Shen},
 pdfsubject={Universal Teichmuller space and operator coordinates},
 pdfkeywords={quasisymmetric homeomorphism, Grunsky operator, universal Teichm\"uller space}
}

\allowdisplaybreaks

\newtheorem{theorem}{Theorem}[section]
\newtheorem{proposition}[theorem]{Proposition}
\newtheorem{lemma}[theorem]{Lemma}
\newtheorem{corollary}[theorem]{Corollary}
\theoremstyle{remark}
\newtheorem{remark}[theorem]{Remark}

\newcommand{\D}{\mathbb D}
\newcommand{\Ds}{\mathbb D^{*}}
\newcommand{\C}{\mathbb C}
\newcommand{\R}{\mathbb R}

\newcommand{\T}{T(1)}
\newcommand{\QS}{\operatorname{QS}(S^{1})}
\newcommand{\Mob}{\operatorname{Mob}(S^{1})}
\newcommand{\Bop}{\mathrm B}
\newcommand{\HS}{\mathrm{HS}}
\newcommand{\id}{\mathrm{id}}
\newcommand{\ess}{\mathrm e}
\newcommand{\wh}{\widehat}

\title[complex-structure operators on $H^{1/2}$]
{Real-analytic realization of universal Teichm\"uller space via complex-structures  on $H^{1/2}$}
\author{Tailiang Liu}
\address{School of Mathematics and Physics, Jiangsu University of Technology, Changzhou 213001, P. R. China}
\email{ltlmath@163.com}
\author{Yuliang Shen}
\address{Department of Mathematics, Soochow University, Suzhou 215006, P. R. China}
\email{ylshen@suda.edu.cn}
\date{August 2026}
\subjclass[2020]{Primary 30C62, 30F60; Secondary 42B20}
\keywords{Quasisymmetric homeomorphism, universal Teichm\"uller space, Grunsky operator, Calder\'on commutator, Weil--Petersson metric}

\begin{document}

\begin{abstract}
Let $H$ be the Hilbert transform, let $h$ be a quasisymmetric homeomorphism of the unit circle $S^1$, and
set $V_hu=u\circ h$ and $J_h=V_hHV_h^{-1}$, defined on the Sobolev space
$H^{\frac12}(S^1)$. We prove that $h\mapsto V_h$ is nowhere continuous in operator norm,
although it is  continuous  in the strong operator topology. By contrast, the
induced map $h\mapsto J_h$ is a real-analytic diffeomorphism onto its image
in the operator-norm topology.  Based on this, we further compute the differential at the identity and show that it is precisely the
Calder\'on commutator. In graph coordinates, the tangent map admits a weighted Hankel matrix representation, whose Hilbert–Schmidt norm recovers the Weil–Petersson tangent quadratic form.
\end{abstract}

\maketitle

\section{Introduction}

Let $\widehat{\C}$ be the Riemann sphere, $ \D=\{z\in\C:|z|<1\},
\Ds=\widehat{\C}\setminus\overline{\D},
S^1=\partial\D.$
A sense-preserving homeomorphism $F$ between planar domains is
\emph{quasiconformal} if $F\in W^{1,2}_{\mathrm{loc}}$ and its complex
dilatation $ \mu_F=\bar\partial F/\partial F $
satisfies $\|\mu_F\|_\infty<1$. Put
\[
 M(\D)=\{\mu\in L^\infty(\D):\|\mu\|_\infty<1\}.
\]
For $\mu\in M(\D)$, extend $\mu$ by zero to $\Ds$ and let $w^\mu$ be the
normalized solution of the Beltrami equation on $\widehat{\C}$ such that
\[
 w^\mu(z)=z+O(z^{-1})\qquad(z\to\infty).
\]
Two coefficients $\mu,\nu\in M(\D)$ are Teichm\"uller equivalent, denoted by $\mu\sim \nu$ if
$w^\mu=w^\nu$ on $\Ds$. The universal Teichm\"uller space is the quotient
\begin{equation}\label{eq:T-Beltrami-intro}
 \T=M(\D)/\sim,
\end{equation}
and $\pi:M(\D)\to\T$ denotes the quotient projection. We refer to
\cite{Ahlfors,GardinerLakic,Lehto,NagBook} for the standard theory.

The same space has a boundary-homeomorphism model. An orientation-preserving
homeomorphism $h:S^1\to S^1$ is \emph{quasisymmetric} if there is a constant
$M\geq1$ such that
\begin{equation}\label{eq:QS-intro}
 M^{-1}\leq\frac{|h(I_1)|}{|h(I_2)|}\leq M
\end{equation}
whenever $I_1$ and $I_2$ are adjacent arcs of equal length; here $|I|$
denotes arc length. By the Beurling--Ahlfors theorem, this is equivalent to
having a quasiconformal extension to $\D$; see
\cite{BeurlingAhlfors}. Let
$\QS$ be the group of such homeomorphisms and let $\Mob$ be the boundary
values of the conformal automorphisms of $\D$. Boundary correspondence gives
\begin{equation}\label{eq:T-QS-intro}
 \T=\QS\backslash\Mob.
\end{equation}
Thus $h$ and $m\circ h$ determine the same point when $m\in\Mob$. Every
class has a unique representative fixing $1$, $i$, and $-1$.

For $[h]\in\T$, let $h\in[h]$ be a representative. Let
\[
 f_h:\D\longrightarrow\Omega_h,
 \qquad
 g_h:\Ds\longrightarrow\Omega_h^*
\]
be a conformal welding pair satisfying
\begin{equation}\label{eq:welding}
 h=f_h^{-1}\circ g_h\quad\text{on }S^1,
 \qquad
 g_h(z)=z+O(z^{-1})\quad(z\to\infty).
\end{equation}
The exterior map $g_h$, which is independent of the choice of representative, is the restriction of $w^\mu$ to $\Ds$ for any
Beltrami representative $\mu$ of $[h]$.

The complex Banach-manifold structure of $\T$ can be described
by the Bers embedding. Let $B(\Ds)$ be the Banach space of holomorphic
functions $\phi$ on $\Ds$ with
\begin{equation}\label{eq:Bersnorm}
 \|\phi\|_B=\sup_{z\in\Ds}(|z|^2-1)^2|\phi(z)|<\infty.
\end{equation}
For a locally univalent holomorphic map $g$, write
\[
 S(g)=\frac{g'''}{g'}-\frac32\left(\frac{g''}{g'}\right)^2
\]
for its Schwarzian derivative. The map
$\mu\mapsto S(w^\mu|_{\Ds})$ is holomorphic on $M(\D)$ and constant on
Teichm\"uller classes. It descends to
\begin{equation}\label{eq:Bers}
 \beta:\T\longrightarrow B(\Ds),
 \qquad
 \beta([h])=S(g_h),
\end{equation}
which is a biholomorphism from $\T$ onto a bounded domain
$\Omega=\beta(\T)$ in $B(\Ds)$. Equivalently, $\pi$ is a holomorphic split
submersion and \eqref{eq:Bers} supplies the standard complex Banach-manifold
structure on $\T$; see \cite{NagBook}.

The corresponding Teichm\"uller distance can be written in the boundary
model as
\begin{equation}\label{eq:Teich-distance-intro}
 \tau([h_1],[h_2])=\frac12\inf_F\log K(F),
 \qquad
 K(F)=\frac{1+\|\mu_F\|_\infty}{1-\|\mu_F\|_\infty},
\end{equation}
where the infimum is taken over quasiconformal self-homeomorphisms of $\D$
whose boundary values are $h_2\circ h_1^{-1}$. We write $0=[\id]$ for the
base point of $\T$.

We now turn to the operator model studied in this paper. 
The space $H^{1/2}(S^{1})$ consists of equivalence classes (modulo constant functions) of complex‑valued functions $u:S^{1}\to\mathbb{C}$ such that
\begin{equation}
	\label{eq:Hhalf-intro}
	\|u\|_{H^{1/2}}^{2}:=\sum_{n\neq 0}|n|\,|\widehat{u}(n)|^{2}<\infty,\qquad 
	\widehat{u}(n)=\frac{1}{2\pi}\int_{0}^{2\pi}u(e^{i\theta})e^{-in\theta}\,d\theta.	
\end{equation}
Equivalently,
\[
\|u\|_{H^{1/2}}^{2}=\frac{1}{4\pi^{2}}\int_{S^{1}}\int_{S^{1}}\frac{|u(\zeta)-u(\eta)|^{2}}{|\zeta-\eta|^{2}}\,|d\zeta||d\eta|.
\]
If $Pu$ denotes the harmonic extension of $u$ to $\mathbb{D}$, then also
\[
\|u\|_{H^{1/2}}^{2}=\frac{1}{2\pi}\int_{\mathbb{D}}|\nabla Pu|^{2}\,dA.
\]
We write $H_{\mathbb{R}}^{1/2}(S^{1})$ for the closed subspace of real‑valued functions.
The Hilbert transform is the Fourier multiplier
\begin{equation}\label{eq:H-intro}
	H \Big(\sum_{n\neq0}u_ne^{in\theta}\Big)
	=-i\sum_{n\neq0}\operatorname{sgn}(n)u_ne^{in\theta},
\end{equation}
so that $H^2=-I$.  In particular, the Hilbert transform $H$ defines the standard
complex structure on $H_{\mathbb{R}}^{1/2}(S^{1})$.  Let
\[
H^{\frac12}_+(S^1)
=
\overline{\operatorname{span}}\{e^{in\theta}:n\ge1\},
\qquad
H^{\frac12}_-(S^1)
=
\overline{\operatorname{span}}\{e^{-in\theta}:n\ge1\},
\]
where the closures are taken in $H^{\frac12}(S^1)$. Equivalently, $H^{\frac12}_+(S^1)$ and $H^{\frac12}_-(S^1)$ are the
boundary trace spaces of the analytic and anti-analytic Dirichlet spaces
on $\D$, respectively. The latter may also be identified with the boundary
traces of holomorphic Dirichlet functions on $\Ds$ vanishing at $\infty$.  Then 
\[
H^{\frac12}(S^1)
=
H^{\frac12}_+(S^1)\oplus H^{\frac12}_-(S^1),
\]
and
\[
H=-iI\quad\text{on }H^{\frac12}_+(S^1),
\qquad
H=iI\quad\text{on }H^{\frac12}_-(S^1).
\]  Relative to this decomposition,
$H$ takes the diagonal block form
\[
H=\begin{pmatrix}
	-i\,I & 0\\
	0 & i\,I
\end{pmatrix}.
\]
The orthogonal projections onto $H^{\frac12}_+$ and $H^{\frac12}_-$ are then given by
\[
P_+=\frac12(I+iH),\qquad
P_-=\frac12(I-iH).
\] Each $h\in\QS$ induces the pull-back
\begin{equation}\label{eq:V-intro}
 V_hu=u\circ h.
\end{equation}
Clearly,
\[
 V_hV_k=V_{k\circ h},\qquad V_h^{-1}=V_{h^{-1}}.
\]
Nag and Sullivan proved that $V_h$ is a bounded automorphism of
$H^{\frac{1}{2}}(S^1)$ for quasisymmetric $h$, and they used this
action to construct the universal period mapping \cite{NagSullivan}.
 We recall from
\cite[Proposition~4.1]{ShenWP} that $h\mapsto V_h$ is continuous from the
Teichm\"uller metric to the strong operator topology; the operator-norm
continuity question was raised in \cite[Question~4.2]{ShenWP}.

For Banach spaces $E$ and $F$, write $\Bop(E,F)$ for the bounded linear
operators from $E$ to $F$ and $\Bop(E)=\Bop(E,E)$. The essential norm of
$T\in\Bop(E)$ is
\[
 \|T\|_{\ess}=\inf\{\|T-K\|:K\text{ is compact on }E\},
\]
and the minimum modulus of an invertible operator is
\[
 m(T)=\inf_{\|u\|=1}\|Tu\|=\|T^{-1}\|^{-1}.
\]
Our first result gives a quantitative negative answer to this question.

\begin{theorem}\label{thm:separation}
Let $h,k\in\QS$ be distinct, and regard $V_h$ and $V_k$ as operators on
$H^{\frac12}(S^1)$. Then
\begin{equation}\label{eq:essential-separation-intro}
 \|V_h-V_k\|_{\ess}
 \geq \bigl(m(V_h)^2+m(V_k)^2\bigr)^{1/2}
 =\bigl(\|V_h^{-1}\|^{-2}+\|V_k^{-1}\|^{-2}\bigr)^{1/2}.
\end{equation}
In particular, the family $\{V_h:h\in\QS\}$ is discrete in essential
norm, and hence also in operator norm.
\end{theorem}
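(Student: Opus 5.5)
Since $h\neq k$, we can choose a small arc $I\subset S^1$ such that $h(I)$ and $k(I)$ are disjoint closed arcs; continuity of $h$ and $k$ makes this possible. The idea is to test $V_h-V_k$ on a weakly null sequence of functions whose supports concentrate near a point of $I$. We fix $\zeta_0\in I$ with $h(\zeta_0)\neq k(\zeta_0)$ and neighbourhoods $U\ni h(\zeta_0)$, $W\ni k(\zeta_0)$ with disjoint closures, where $h^{-1}(U)\cap k^{-1}(W)$ contains an arc $I$ around $\zeta_0$.

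The construction proceeds as follows. Since $V_h^{-1}=V_{h^{-1}}$, $m(V_h)=\inf_{\|u\|=1}\|u\circ h\|$. Choose $u_n$ with $\|u_n\|=1$ and $\|V_hu_n\|\le m(V_h)+1/n$. We want to localize: replace $u_n$ by functions supported in $U$ that still nearly realize the infimum and converge weakly to $0$. For this we use the $\Mob$-invariance of the $H^{1/2}$ seminorm (it equals the Dirichlet integral, which is conformally invariant). Composing with Möbius maps concentrating at $h(\zeta_0)$ gives $\|\phi_t\circ\sigma_t\|=\|\phi_t\|$. Combined with the quasisymmetric distortion being "scale-invariant", this lets us transplant near-extremal functions into an arbitrarily small neighbourhood of $h(\zeta_0)$.

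Precisely, we need that $m(V_h)$ equals the local infimum
\[
m_{\mathrm{loc}}=\lim_{\varepsilon\to0}\inf\{\|u\circ h\|:\|u\|=1,\ \operatorname{supp}u\subset B(h(\zeta_0),\varepsilon)\},
\]
or at least that the local infimum is $\ge m(V_h)$. The latter is trivial, and it is the right direction, as follows. Take $u_n$ supported in shrinking neighbourhoods of $h(\zeta_0)$ with $\|u_n\|=1$; these tend weakly to $0$ because they are unit vectors with shrinking support, using a cutoff/capacity argument in $H^{1/2}$. Similarly take $v_n$ supported near $k(\zeta_0)$. Set $w_n=a\,u_n\circ h^{-1}$-type test functions; more simply, test on $f_n=V_h^{-1}$-preimages. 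We will use the following concrete choice. Let $f_n$ have $\|f_n\|=1$ and support in a shrinking arc $I_n\to\zeta_0$, so $f_n\rightharpoonup0$. Then $V_hf_n$ is supported... this runs the wrong way, since $V_h$ pulls back. Instead take $g_n$ supported near $h(\zeta_0)$ and $g'_n$ supported near $k(\zeta_0)$. Then $(V_h-V_k)(g_n+g'_n)=g_n\circ h-g'_n\circ k+(\text{terms } g'_n\circ h,\ g_n\circ k)$. The extra terms vanish near $\zeta_0$ but are not zero globally. To fix this, choose the supports so that $g_n\circ h$ and $g_n'\circ k$ are both supported in $I$, while $g'_n\circ h$ and $g_n\circ k$ are supported in $h^{-1}(W)$ and $k^{-1}(U)$ respectively. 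With disjoint, well-separated supports, the cross inner products in $H^{1/2}$ are bounded by $\|\cdot\|_{L^1}$-type quantities times $\operatorname{dist}^{-2}$, which tend to zero as the supports shrink. Hence asymptotically
\[
\|(V_h-V_k)(g_n+g_n')\|^2\approx\|g_n\circ h\|^2+\|g'_n\circ k\|^2+\|g_n\circ k\|^2+\|g_n'\circ h\|^2\ \ge\ m(V_h)^2\|g_n\|^2+m(V_k)^2\|g'_n\|^2 .
\]
This computation discards the last two terms. Normalizing $\|g_n\|=\|g_n'\|=1/\sqrt2$ gives only $(m_h^2+m_k^2)/2$. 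To get the full bound, we instead use $\|(V_h-V_k)g_n\|^2\approx \|g_n\circ h\|^2+\|g_n\circ k\|^2\ge m(V_h)^2+m(V_k)^2$ with the single weakly null unit vector $g_n$ supported near a point $p$ where $h^{-1}(p)\ne k^{-1}(p)$. This is exactly the desired estimate. Since $\|T\|_{\ess}\ge\limsup\|Tg_n\|$ for weakly null unit $g_n$ (compacts send them to $0$ in norm), the theorem follows. Discreteness follows because $m(V_h)>0$.

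The main obstacle is the asymptotic orthogonality: showing that the $H^{1/2}$ inner product of functions with disjoint supports at positive distance tends to $0$, i.e.\ $|\langle a,b\rangle|\lesssim \operatorname{dist}^{-2}\|a\|_{L^1}\|b\|_{L^1}$ via the double-integral formula. We also need that unit $H^{1/2}$ functions with shrinking support have $L^1$ norm $\to0$ and are weakly null. We would obtain both from the Poincaré-type bound $\|a\|_{L^2}\lesssim\|a\|_{H^{1/2}}$ for mean-adjusted functions, together with the Cauchy–Schwarz inequality on shrinking arcs. Careful handling of the modulo-constants convention is needed here.
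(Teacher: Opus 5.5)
Your final argument is correct and is essentially the paper's proof: a single normalized, weakly null function $g_n$ concentrating at a point $p$ with $h^{-1}(p)\neq k^{-1}(p)$; asymptotic orthogonality of $g_n\circ h$ and $g_n\circ k$ via the $d^{-2}\|\cdot\|_{L^1}\|\cdot\|_{L^1}$ bound from the double-integral formula; the minimum-modulus lower bounds; and the fact that compact operators send weakly null sequences to zero. The only difference is that you get $L^1$-smallness from the Poincar\'e bound plus Cauchy--Schwarz on the shrinking arcs rather than from uniformly bounded $L^\infty$ norms of explicitly scaled bumps, which works and makes the critical-scaling lemma unnecessary; the abandoned detours (localizing near-extremal functions, the two-bump test $g_n+g_n'$) should simply be deleted.
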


\begin{corollary}\label{cor:nowhere}
Let $h_j,h$ be normalized quasisymmetric homeomorphisms and suppose that
$h_j\to h$ in the Teichm\"uller metric, with $h_j\neq h$. Then
\[
 \liminf_{j\to\infty}\|V_{h_j}-V_h\|_{\ess}>0.
\]
If $h=\id$, then
\[
 \liminf_{j\to\infty}\|V_{h_j}-I\|_{\ess}\geq\sqrt2.
\]
Nevertheless, $V_{h_j}u\to V_hu$ in $H^{\frac{1}{2}}(S^1)$ for every fixed
$u$.
\end{corollary}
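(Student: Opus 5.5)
Corollary~\ref{cor:nowhere} follows from Theorem~\ref{thm:separation}, plus a uniform control of $\|V_{h_j}^{-1}\|$ along a Teichm\"uller-convergent sequence and the strong continuity result \cite[Proposition~4.1]{ShenWP}.

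\emph{Step 1: a uniform lower bound on $m(V_{h_j})$.} Nag--Sullivan give a bound $\|V_k\|\le C(K)$ for $V_k$ on $H^{\frac12}(S^1)$, where $C$ depends only on the quasisymmetry constant of $k$, equivalently on the maximal dilatation $K$ of an extension. The cleanest route is the Douglas/Dirichlet-integral form of the norm: if $F$ is a $K$-quasiconformal extension of $k$, then $\|u\circ k\|^2_{H^{1/2}}\le \frac{1}{2\pi}\int_\D|\nabla(Pu\circ F)|^2\le K\|u\|^2_{H^{1/2}}$, since the harmonic extension minimizes the Dirichlet integral and Dirichlet energy is $K$-quasi-invariant. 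Hence $\|V_k\|\le \sqrt{K(k)}$, and likewise $\|V_k^{-1}\|=\|V_{k^{-1}}\|\le\sqrt{K(k)}$, because $k^{-1}$ has the same dilatation. Because $h_j\to h$ in the Teichm\"uller metric and the $h_j$ are normalized, $K(h_j)\le K(h)e^{2\tau([h_j],[h])}$ via composition of extensions, so it stays bounded. Therefore $m(V_{h_j})\ge K(h_j)^{-1/2}\ge c>0$ uniformly.

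\emph{Step 2: the liminf bounds.} Since the $h_j$ are normalized and $h_j\ne h$, we have $h_j\ne h$ as homeomorphisms, so \eqref{eq:essential-separation-intro} applies to each pair. It gives $\|V_{h_j}-V_h\|_{\ess}\ge (c^2+m(V_h)^2)^{1/2}\ge m(V_h)>0$, and this already yields the first claim. For $h=\id$ we have $m(I)=1$, and $K(h_j)\to1$ because $\tau([h_j],0)\to0$ and the normalized representative is the one to which the infimum in \eqref{eq:Teich-distance-intro} refers. Hence $m(V_{h_j})\ge K(h_j)^{-1/2}\to1$, and so $\liminf\|V_{h_j}-I\|_{\ess}\ge\sqrt{1+1}=\sqrt2$.

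\emph{Step 3: strong convergence.} This is exactly \cite[Proposition~4.1]{ShenWP}. If a direct argument is wanted, it goes as follows. For a trigonometric polynomial $u$, $V_{h_j}u\to V_hu$ in $H^{\frac12}$: the quasiconformal extensions converge locally uniformly with uniformly bounded dilatation, and the energies converge. One then passes to general $u$ by density, using the uniform bound $\|V_{h_j}\|\le\sqrt{K(h_j)}\le C$ from Step 1.

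\emph{Main obstacle.} The delicate point is Step 1's quantitative claim $m(V_{h_j})\to1$ when $h=\id$, which requires the sharp bound $\|V_k\|\le\sqrt{K}$ rather than an unspecified constant $C(K)$. The Dirichlet-integral argument above gives this sharp bound, so I expect it to go through. One must check that constants (kernels of the seminorm) cause no trouble, and that the dilatation of normalized representatives tends to $1$ under Teichm\"uller convergence; this follows from \eqref{eq:Teich-distance-intro} by taking near-extremal extensions of $h_j\circ h^{-1}$.
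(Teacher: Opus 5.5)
Your proposal is correct and follows the paper's proof: apply Theorem~\ref{thm:separation}, bound $m(V_{h_j})=\|V_{h_j^{-1}}\|^{-1}\ge e^{-\tau(0,[h_j])}$ using the Dirichlet-integral estimate $\|V_k\|\le\sqrt{K}=e^{\tau(0,[k])}$, and cite the strong continuity result for the last claim. Your remark that the first claim already follows from $m(V_h)>0$ alone is a valid small simplification: the uniform lower bound on $m(V_{h_j})$, and hence the Teichm\"uller convergence, is only needed for the $\sqrt2$ estimate.
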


A concrete smooth path tending to the identity is retained in
Proposition~\ref{prop:smoothpath}. It makes the failure of operator-norm
continuity visible without any abstract choice of a convergent sequence.
 
For $h\in\QS$, define
\begin{equation}\label{eq:Jdef-intro}
 J_h=V_hHV_h^{-1}.
\end{equation}
Then $J_h^2=-I$, so $J_h$ defines a complex structure on
$H^{\frac12}_{\R}(S^1)$. Its $-i$ and $i$ eigenspaces
are $V_h(H^{\frac{1}{2}}_{+})$ and $V_h(H^{\frac{1}{2}}_{-})$, respectively.
For $m\in\Mob$, we have $V_{m\circ h}=V_hV_m$ and
$J_{m\circ h}=J_h$. Thus $J_h$ depends only on the Teichm\"uller class
$[h]$.

\begin{theorem}
\label{thm:main-period}
The map
\[
 \Phi:\T\longrightarrow \mathcal{J}\subset \Bop(H^{\frac{1}{2}}(S^1)),
 \qquad \Phi([h])=J_h,
\]
is a real-analytic embedding in the operator-norm topology. Further, $\Phi^{-1}$ is the restriction of
a holomorphic map defined on an open neighborhood of $\mathcal{J}$ in
$\Bop(H^{\frac{1}{2}}(S^1))$. 
\end{theorem}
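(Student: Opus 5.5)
The plan is to factor $\Phi$ through the Nag--Sullivan period map and then read off both directions from the block structure of $V_h$ with respect to $H^{\frac12}_+\oplus H^{\frac12}_-$. Since $V_h$ commutes with complex conjugation, it has the form
\[
V_h=\begin{pmatrix} A_h & \overline{B_h}\\ B_h & \overline{A_h}\end{pmatrix}.
\]
Here $A_h:H^{\frac12}_+\to H^{\frac12}_+$ is invertible, which is the standard Nag--Sullivan fact for quasisymmetric $h$. Put $Z_h=B_hA_h^{-1}:H^{\frac12}_+\to H^{\frac12}_-$. Then the $-i$ eigenspace $V_h(H^{\frac12}_+)$ of $J_h$ is the graph $\{x+Z_hx: x\in H^{\frac12}_+\}$, and the $i$ eigenspace is its complex conjugate, the graph of $\overline{Z_h}$ over $H^{\frac12}_-$. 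With
\[
W_h=\begin{pmatrix} I & \overline{Z_h}\\ Z_h & I\end{pmatrix},
\]
this gives $J_h=W_hHW_h^{-1}$. The Schur complement of $W_h$ is $I-Z_h\overline{Z_h}$, which is invertible because $\|Z_h\|<1$ (the Siegel-disc property). Consequently $J_h$ is a real-analytic function of the pair $(Z_h,\overline{Z_h})$ on the open operator ball $\{\|Z\|<1\}$.

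The next step identifies $Z_h$, up to fixed diagonal weights $\sqrt{m},\sqrt{n}$ and conjugation, with the Grunsky operator of $g_h$. I would quote the known result that $[h]\mapsto Z_h$ is holomorphic from $\T$, in the Bers coordinates \eqref{eq:Bers}, into $\Bop(H^{\frac12}_+,H^{\frac12}_-)$ with the operator norm. This holomorphy is the Takhtajan--Teo/Shen result that $\mu\mapsto$ Grunsky operator is holomorphic on $M(\D)$; composing it with the holomorphic split submersion $\pi$ transfers it to $\T$. Composing this holomorphic map with the real-analytic formula for $J_h$ in terms of $(Z,\overline Z)$ shows that $\Phi$ is real-analytic.

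The inverse direction is where holomorphy comes in. Given $J$ near $\mathcal J$, form $Q=\frac12(I+iJ)$, which is complex-linear in $J$ and equals the projection onto $V_h(H^{\frac12}_+)$ when $J=J_h$. Write its blocks as $Q_{++}$ and $Q_{-+}$; on $\mathcal J$ one checks $Q_{++}=(I-\overline{Z}Z)^{-1}$, which is invertible. The set where $Q_{++}$ is invertible is open, and there
\[
\Psi_1(J)=Q_{-+}Q_{++}^{-1}
\]
is holomorphic and equals $Z_h$ on $\mathcal J$.

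Next, pass from $Z$ to the Bers point. The Grunsky kernel is
\[
K(z,\zeta)=\partial_z\partial_\zeta\log\frac{g(z)-g(\zeta)}{z-\zeta}=\sum_{m,n}\sqrt{mn}\,b_{mn}z^{-m-1}\zeta^{-n-1},
\]
and it satisfies $S(g)(z)=6K(z,z)$. Testing $Z$ against normalized reproducing kernels of the Dirichlet space of $\Ds$ gives $(|z|^2-1)^2|K(z,z)|\le C\|Z\|$. Hence $\Lambda:Z\mapsto 6K(z,z)$ extends to a bounded linear map $\Bop(H^{\frac12}_+,H^{\frac12}_-)\to B(\Ds)$. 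Then $\Lambda\circ\Psi_1$ is holomorphic on an open neighborhood of $\mathcal J$, and on $\mathcal J$ it equals $\beta\circ\Phi^{-1}$. In Bers coordinates this is exactly the asserted holomorphic extension of $\Phi^{-1}$.

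Embedding then follows formally. Because $\Phi$ has a continuous (indeed smooth) left inverse, $\Phi$ is injective and a homeomorphism onto $\mathcal J$ with the operator-norm topology. Its differential is injective with closed, complemented range, since $d(\Lambda\Psi_1)\circ d\Phi=\mathrm{id}$ and $d\Phi\circ d(\Lambda\Psi_1)$ is a bounded projection onto the range. The step I expect to be the main obstacle is the operator-norm holomorphy of $[h]\mapsto Z_h$, together with the exact identification of $Z_h$ with the weighted Grunsky matrix including the correct conjugations. If the quoted result is not available in precisely this form, I would prove it directly. One shows that the Grunsky coefficients $b_{mn}(\mu)$ are holomorphic in $\mu$ and that the operators are locally uniformly bounded, via the Grunsky inequality $\|Z\|\le\|\mu\|_\infty$. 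Weak holomorphy plus local boundedness then gives norm holomorphy.
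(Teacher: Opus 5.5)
Your proposal is correct and follows the paper's proof. The shared steps are the graph operator $Z_h=B_hA_h^{-1}$ identified with the weighted Grunsky matrix, $J_h=R_{Z_h}HR_{Z_h}^{-1}$ being real-analytic in $(Z,\widetilde Z)$, the holomorphic inverse $J\mapsto Q_{-+}Q_{++}^{-1}$ built from $Q_J=\frac12(I+iJ)$, the bounded linear map $E$ (your $\Lambda$) that recovers the Schwarzian from the diagonal of the Grunsky kernel, and the left-inverse argument for the embedding. One detail to add: the holomorphic extension of $\Phi^{-1}$ is $\beta^{-1}\circ\Lambda\circ\Psi_1$, defined on the open set where $Q_{++}$ is invertible and $\Lambda\Psi_1(J)\in\Omega=\beta(\T)$; this is exactly the paper's set $\mathcal U$.
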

This local inverse has two immediate consequences.
\begin{corollary}\label{cor:topology}
For normalized $h_j,h\in\QS$,
\begin{equation}
\label{eq:top-equivalence}
[h_j]\longrightarrow[h]\text{ in }\T
\quad\Longleftrightarrow\quad
\|J_{h_j}-J_h\|\longrightarrow0.
\end{equation}
 For every $[h_0]\in\T$ there are a neighborhood $U$ of $[h_0]$ and a
constant $C\geq1$ such that, for $[h],[k]\in U$,
\begin{equation}
	\label{eq:localcomparison}
	C^{-1}\|\beta([h])-\beta([k])\|_B
	\leq\|J_h-J_k\|
	\leq C\|\beta([h])-\beta([k])\|_B.
\end{equation}
\end{corollary}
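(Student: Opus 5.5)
The plan is to deduce both parts of Corollary~\ref{cor:topology} from Theorem~\ref{thm:main-period}, together with the fact that the Bers embedding $\beta$ is a biholomorphism onto the open set $\Omega\subset B(\Ds)$.

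\textbf{The equivalence \eqref{eq:top-equivalence}.} For the forward direction, Theorem~\ref{thm:main-period} says $\Phi$ is real-analytic, in particular continuous, from $\T$ to $\Bop(H^{\frac12}(S^1))$ with the operator norm. Hence $[h_j]\to[h]$ implies $\|J_{h_j}-J_h\|\to0$. For the converse, let $\Psi$ be the holomorphic map on an open neighborhood $\mathcal U$ of $\mathcal J$ provided by Theorem~\ref{thm:main-period}, with $\Psi|_{\mathcal J}=\Phi^{-1}$. Holomorphic maps between Banach manifolds are continuous. If $J_{h_j}\to J_h$ in norm, then $J_{h_j}\in\mathcal U$ and
\[
[h_j]=\Psi(J_{h_j})\to\Psi(J_h)=[h].
\]
Normalization only serves to identify the points $[h_j]$ with the representatives $h_j$, so it plays no role here.

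\textbf{The comparison \eqref{eq:localcomparison}.} Work in the Bers chart. Set $F=\Phi\circ\beta^{-1}:\Omega\to\Bop(H^{\frac12}(S^1))$, which is real-analytic, and $G=\beta\circ\Psi:\mathcal U\to B(\Ds)$, which is holomorphic. Then $G\circ F=\id$ on $\Omega$.

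Fix $\phi_0=\beta([h_0])$ and choose a ball $B(\phi_0,r)\subset\Omega$ on which $dF$ is uniformly bounded. This is possible because real-analytic maps are $C^1$, so $\|dF\|$ is locally bounded near $\phi_0$. The mean value inequality on this convex ball gives
\[
\|F(\phi)-F(\psi)\|\le C_1\|\phi-\psi\|_B .
\]
This is the right-hand inequality with $\phi=\beta([h])$ and $\psi=\beta([k])$.

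Similarly, $G$ is holomorphic near $J_{h_0}$. Choose $\rho>0$ with $B(J_{h_0},\rho)\subset\mathcal U$ and $\|dG\|\le C_2$ on this ball, using local boundedness of holomorphic maps and Cauchy estimates. Then for $J_h,J_k$ in the ball,
\[
\|\beta([h])-\beta([k])\|_B=\|G(J_h)-G(J_k)\|_B\le C_2\|J_h-J_k\|.
\]

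\textbf{Choice of $U$.} Take
\[
U=\beta^{-1}\bigl(B(\phi_0,r')\bigr),\qquad r'\le r \text{ small enough that } F\bigl(B(\phi_0,r')\bigr)\subset B(J_{h_0},\rho).
\]
Such an $r'$ exists by continuity of $F$. Both estimates then hold on $U$, with $C=\max(C_1,C_2,1)$.

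\textbf{Main subtlety.} The key point is that the inverse $\Psi$ is defined on an open set of operators and not just on $\mathcal J$. This lets us apply the mean value inequality on a convex ball in $\Bop(H^{\frac12}(S^1))$ rather than along the possibly non-convex set $\mathcal J$. Theorem~\ref{thm:main-period} supplies exactly this, so no further obstacle is expected.
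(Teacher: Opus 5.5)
Your proposal is correct and follows the paper's proof. You get \eqref{eq:top-equivalence} from the continuity of $\Phi$ and of the holomorphic inverse $K$ on the open set $\mathcal U$. You get \eqref{eq:localcomparison} from the mean value inequality for $\phi\mapsto J_{\beta^{-1}(\phi)}$ and for $\beta\circ K$ on convex balls where their derivatives are bounded. You also make explicit one step the paper leaves implicit: shrinking $U$ so that the image of the Bers ball lies in the operator ball.
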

Recall that $h\in\QS$ is called \emph{symmetric} if
\[
\lim_{\delta\to 0}
\sup_{\substack{|I_1|=|I_2|\le\delta}}
\frac{|h(I_1)|}{|h(I_2)|}=1,
\]
where $I_1$ and $I_2$ are adjacent arcs of $S^1$, and that $h$ belongs to
the Weil--Petersson class $\operatorname{WP}(S^1)$ if it has a
quasiconformal extension $F$ to $\D$ satisfying
\[
 \int_{\D}\frac{|\mu_F(z)|^2}{(1-|z|^2)^2}\,dA(z)<\infty.
\]
 The results of \cite{HuShen} show that the anti-analytic pull-back is
 compact exactly on the symmetric  class and Hilbert--Schmidt exactly on the
 Weil--Petersson class. In the present coordinate this becomes the following. 

\begin{corollary}
\label{cor:hushen}
Let $h\in\QS$. Then
\[
 h\text{ is symmetric}
 \quad\Longleftrightarrow\quad
 J_h-H\text{ is compact},
\]
and
\[
 h\in\operatorname{WP}(S^1)
 \quad\Longleftrightarrow\quad
 J_h-H\text{ is Hilbert--Schmidt}.
\]
\end{corollary}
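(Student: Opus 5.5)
Our plan is to reduce both equivalences to the Hu--Shen characterization \cite{HuShen} by writing $J_h-H$ in block form relative to $H^{\frac12}(S^1)=H^{\frac12}_+\oplus H^{\frac12}_-$. We write $V_h$ in blocks
\[
V_h=\begin{pmatrix} A_h & B_h\\ \overline{B}_h & \overline{A}_h\end{pmatrix},
\]
where $A_h=P_+V_hP_+$ and $B_h=P_+V_hP_-$. Since $V_h$ commutes with complex conjugation, and conjugation swaps $H^{\frac12}_\pm$, the lower blocks are the conjugates of the upper ones. The operator $B_h$ (equivalently $P_-V_hP_+$) is the anti-analytic part of the pull-back, i.e.\ the Grunsky-type operator. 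By \cite{HuShen}, $B_h$ is compact iff $h$ is symmetric and Hilbert--Schmidt iff $h\in\operatorname{WP}(S^1)$. These properties are invariant under $h\mapsto m\circ h$ and $h\mapsto h^{-1}$, since both classes are groups containing $\Mob$. They are also invariant under conjugation, so the same holds for $P_-V_hP_+$ and for the corresponding blocks of $V_h^{-1}=V_{h^{-1}}$.

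Next we compute $J_h-H$ through the commutator. We have $J_h-H=V_hHV_h^{-1}-H=[V_h,H]V_h^{-1}$. Since $H=-i(P_+-P_-)$ is block diagonal, the commutator kills the diagonal blocks:
\[
[V_h,H]=2i\begin{pmatrix}0 & B_h\\ -\overline{B}_h & 0\end{pmatrix}.
\]
Compact and Hilbert--Schmidt operators form two-sided ideals in $\Bop(H^{\frac12}(S^1))$, and $V_h^{-1}$ is bounded. So if $h$ is symmetric (resp.\ WP), then $B_h$ lies in the ideal, hence so do $[V_h,H]$ and $J_h-H$. This proves both implications ``$\Rightarrow$''.

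For the converse, suppose $J_h-H$ is compact (resp.\ Hilbert--Schmidt). Multiplying on the right by the bounded operator $V_h$ gives $[V_h,H]=(J_h-H)V_h$ in the ideal. Compressing by $P_\pm$ preserves the ideal, and $P_+[V_h,H]P_-=2iB_h$, so $B_h$ lies in the ideal. The Hu--Shen theorem then gives that $h$ is symmetric (resp.\ WP).

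The main obstacle is matching conventions. We must check that the operator Hu--Shen call the anti-analytic pull-back is exactly $P_+V_hP_-$ or $P_-V_hP_+$ for $h$ rather than $h^{-1}$, and with the same $H^{\frac12}$ norm modulo constants. The invariances noted in the first paragraph (under inversion and conjugation) remove the need for this bookkeeping. We also need the quoted results to hold for all $h\in\QS$, not only normalized ones. This follows from $V_{m\circ h}=V_hV_m$: $V_m$ is block diagonal, being unitary and preserving $H^{\frac12}_\pm$, so $B_{m\circ h}=B_hV_m|_{H^{\frac12}_-}$ lies in the same ideals as $B_h$.
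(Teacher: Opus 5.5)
Your proof is correct, and it takes a genuinely different and shorter route than the paper's.

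The paper works in its graph coordinate. It uses the invertibility of $A_h$ (Theorem~\ref{thm:Ainvertible}) to pass compactness and the Hilbert--Schmidt property between $B_h=P_-V_h|_{H^{\frac12}_+}$ and $Z_h=B_hA_h^{-1}$. It then reads off $J_h-H$ from the explicit block formula \eqref{eq:Jblock}, using $J_h=J(Z_h)$ (Proposition~\ref{prop:JhJZ}). The blocks there are $Z_h$ and $\widetilde{Z}_h$ composed with the bounded inverses $(I-\widetilde{Z}_hZ_h)^{-1}$ and $(I-Z_h\widetilde{Z}_h)^{-1}$, which exist because $\|Z_h\|<1$ by the Grunsky inequality. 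The converse recovers $Z_h$ by multiplying the lower-left block by $I-\widetilde{Z}_hZ_h$.

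You bypass all of this with the identity $J_h-H=[V_h,H]V_h^{-1}$. Its only inputs are the boundedness of $V_h^{\pm1}$ and the fact that the off-diagonal blocks of $[V_h,H]$ are exactly $\pm 2i$ times the anti-analytic blocks of $V_h$. The two-sided ideal property then gives both directions at once. You do not need Theorem~\ref{thm:Ainvertible}, the Faber--Grunsky identity, or the graph map $Z\mapsto J(Z)$. Your argument in fact shows, for any bounded invertible $T$, that $THT^{-1}-H$ lies in an ideal if and only if the off-diagonal blocks of $T$ do.

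What the paper's route buys is a statement in the coordinate $Z_h\cong G([h])$ used throughout Section~\ref{sec:period}. It shows directly that the ideal class of $J_h-H$ is governed by the Grunsky operator. Yours is more elementary and self-contained.

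One minor point on notation. Your $B_h=P_+V_hP_-$ equals $C(P_-V_hP_+)C$, the conjugate of the paper's $B_h$. As you note, conjugation by $C$ preserves both ideals. So your extra bookkeeping about inversion and M\"obius normalization is harmless, but it is not actually needed.
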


There is an earlier BMO counterpart of the forward real-analyticity in Theorem \ref{thm:main-period}. Recall that  a strongly quasisymmetric homeomorphism $h$ on the real  line $\mathbb{R}$ is defined to be a locally absolutely continuous  with $h'$ is an $A_\infty$ weight.  In particular, \(\log h'\in \mathrm{BMO}\). Coifman and Meyer  \cite{CoifmanMeyer} proved that the map
\[
\log h'\longmapsto V_hHV_h^{-1}
\]
is  real-analytic 
 by real‑variable methods; see also \cite{CoifmanSemmes,SemmesCauchy}. In addition, they also noted the map  $h\to V_h$ fails to be continuous in the sense of  the operator norm, without giving a proof; see \cite[corollary 8.6]{WeiMatsuzaki} for more details.

The paper is organized as follows. Section~\ref{sec:pullbacks} proves the
essential-norm separation theorem and constructs an explicit smooth path demonstrating the absence of operator‑norm convergence.
Section~\ref{sec:period} serves as the main body of this paper, devoted to proving that the embedding \([h]\mapsto J_h\) is a real-analytic diffeomorphism onto its image. Section~\ref{sec:infinitesimal} computes the
differential in boundary coordinates and recovers the M\"obius kernel and
the Weil--Petersson form.

\section{Pull-backs on the critical Sobolev space}\label{sec:pullbacks}

\subsection{The pull-back action}
For smooth real-valued functions modulo constants, put
\begin{equation}
 \omega(u,v)=\frac1{2\pi}\int_{S^{1}}u\,dv.
\end{equation}
A direct Fourier calculation shows that it extends continuously to a nondegenerate symplectic form on
$H^{\frac{1}{2}}_{\R}(S^{1})$, and $H$ is compatible with it. This is the symplectic
Hilbert space used by Nag and Sullivan \cite{NagSullivan}.

We use the same notation $V_h$ for the complex-linear extension to
$H^{\frac{1}{2}}(S^{1})$. Complexification preserves its operator norm. 

\begin{theorem}[\cite{NagSullivan}]\label{thm:NS-boundedness}
For an orientation-preserving homeomorphism $h:S^{1}\to S^{1}$, the operator
$V_h$ is a bounded automorphism of $H^{\frac{1}{2}}_{\R}(S^{1})$ if and only if
$h\in\QS$. In that case
\[
 V_h^{-1}=V_{h^{-1}},
\]
$V_h$ preserves the symplectic form, and
\begin{equation}\label{eq:Vnorm}
 \|V_h\|\leq e^{\tau(0,[h])}.
\end{equation}
\end{theorem}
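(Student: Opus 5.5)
The plan is to work entirely with the symplectic form $\omega$ and the Dirichlet energy description of the norm. There are four ingredients: (i) invariance of $\omega$ under $V_h$ for homeomorphisms, (ii) boundedness via quasiconformal extension when $h\in\QS$, (iii) the inverse formula, and (iv) necessity of quasisymmetry. The inverse formula is immediate once boundedness is known: $V_{h^{-1}}V_h=V_hV_{h^{-1}}=I$ on smooth functions, hence everywhere by density. So the substance lies in (i), (ii) and (iv).

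\emph{Symplectic invariance.} For $u,v$ smooth and real, $\int_{S^1}(u\circ h)\,d(v\circ h)=\int_{S^1}u\,dv$. This is a change of variables in a Stieltjes integral, valid for any orientation-preserving homeomorphism. Once $V_h$ is known to be bounded, the identity extends to all of $H^{1/2}_\R$ by continuity of $\omega$.

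\emph{Boundedness with the constant $e^{\tau}$.} Fix $u\in H^{1/2}_\R$ and let $U=Pu$ be its harmonic extension, which minimizes Dirichlet energy among extensions with trace $u$. Let $F$ be a $K$-quasiconformal self-map of $\D$ with boundary values $h$, where $K$ is arbitrarily close to $e^{2\tau(0,[h])}$. Then $U\circ F$ is a $W^{1,2}$ extension of $u\circ h$. The pointwise inequality $|\nabla(U\circ F)|^2\le K\,|\nabla U|^2\circ F\cdot J_F$, together with the change of variables, gives
\[
\|u\circ h\|^2_{H^{1/2}}\le\frac1{2\pi}\int_\D|\nabla(U\circ F)|^2\le K\|u\|^2_{H^{1/2}}.
\]
Hence $\|V_h\|\le K^{1/2}$, and letting $K\downarrow e^{2\tau}$ yields \eqref{eq:Vnorm}. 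Applying the same argument to $h^{-1}$, which has the same dilatation bound, shows that $V_h$ is an automorphism. Complexification then gives the statement on $H^{1/2}(S^1)$.

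\emph{Necessity.} This is the step I expect to be the main obstacle. Suppose $V_h$ and $V_h^{-1}$ are bounded, say $\|V_h\|,\|V_{h^{-1}}\|\le C$. I would test the operators against harmonic measure and capacity functions. For disjoint closed arcs $E,F\subset S^1$, the conformal capacity of the quadrilateral $(\D;E,F)$ is comparable to $\inf\{\|u\|^2_{H^{1/2}}: u\ge1 \text{ on }E,\ u\le0\text{ on }F\}$, after truncating and taking the harmonic extension. Boundedness of $V_h^{\pm1}$ then shows that $h$ distorts these condenser capacities by at most a factor $C^2$. One subtlety is that truncation must commute with composition, which is true because $(u\wedge1)\circ h=(u\circ h)\wedge1$. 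The remaining step is to choose adjacent equal arcs $I_1,I_2$ and build a quadruple of arcs whose cross-ratio modulus is bounded. Bounded distortion of the modulus then bounds $|h(I_1)|/|h(I_2)|$, via the standard relation between the modulus of a quadrilateral and the cross ratio, which tends to infinity as one of the arc ratios degenerates. This yields \eqref{eq:QS-intro}. Here I would rely on the classical equivalence between quasisymmetry and quasi-invariance of the extremal length of quadrilaterals, as in \cite{NagSullivan}.
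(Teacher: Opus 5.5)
Your boundedness argument is the paper's own. The only justification the paper gives for \eqref{eq:Vnorm} is the quasiconformal quasi-invariance of the Dirichlet integral, which is exactly your step: compose $Pu$ with a near-extremal $K$-quasiconformal extension and apply the Dirichlet principle. It then applies the bound to $h^{-1}$, whose extensions have the same maximal dilatation.

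The symplectic invariance and the converse direction are not argued in the paper; they are quoted from Nag--Sullivan. Your Stieltjes change of variables and your condenser-capacity argument are therefore additions. Both are the classical routes and both are sound. The necessity step legitimately rests on the equivalence between quasisymmetry and quasi-invariance of moduli of boundary quadrilaterals.

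Two details deserve tightening.

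First, for a general class $u\in H^{\frac12}$ the composition $u\circ h$ has no a priori meaning. A quasisymmetric $h$ may be singular, so the preimage $h^{-1}(N)$ of a null set $N$ can have positive measure. Run the energy estimate for smooth $u$, where $U\circ F$ is continuous on $\overline{\D}$ with boundary values $u\circ h$. Then define $V_h$ on $H^{\frac12}$ as the bounded extension; this is already what your density argument for $V_h^{-1}=V_{h^{-1}}$ uses.

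Second, in the symplectic step $u\circ h$ and $v\circ h$ are not smooth. You must check that the $H^{\frac12}$-continuous extension of $\omega$, evaluated at this pair, equals the Stieltjes integral $\frac1{2\pi}\int(u\circ h)\,d(v\circ h)$. This follows by passing to Fejér means. They converge uniformly and in $H^{\frac12}$ with uniformly bounded total variation, because both functions are continuous and of bounded variation.

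If you want the last step of the necessity argument without quoting the classical equivalence, use the symmetric quadrilateral $(\D;\zeta e^{-i\delta},\zeta,\zeta e^{i\delta},-\zeta)$, which has modulus one. The resulting bound on the image cross-ratio controls $|h(I_1)|/|h(I_2)|$ once uniform continuity of $h$ keeps $h(-\zeta)$ a definite distance from $h(I_1\cup I_2)$ for small $\delta$. The case of $\delta$ bounded below follows by compactness.
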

The norm estimate follows from
quasiconformal quasi-invariance of the Dirichlet integral . Inverting a quasiconformal extension
does not change its maximal dilatation, so
$\tau(0,[h^{-1}])=\tau(0,[h])$. Applying \eqref{eq:Vnorm} to $h^{-1}$ therefore
gives
\begin{equation}\label{eq:inverse-minimum-bound}
 \|V_h^{-1}\|\leq e^{\tau(0,[h])},\qquad
 m(V_h)\geq e^{-\tau(0,[h])}.
\end{equation}

\begin{theorem}[Strong-operator continuity]\label{thm:strong-cont}
Let $h_j$ and $h$ be normalized quasisymmetric homeomorphisms. If
$h_j\to h$ in the Teichm\"uller topology, then
\[
 V_{h_j}u\longrightarrow V_hu\quad\text{in }H^{\frac{1}{2}}(S^{1})
\]
for every fixed $u\in H^{\frac{1}{2}}(S^{1})$.
\end{theorem}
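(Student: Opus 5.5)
The argument is the standard $\varepsilon/3$ scheme: a uniform operator bound plus convergence on a dense subspace. Teichmüller convergence $h_j\to h$ gives $\tau(0,[h_j])\le \tau(0,[h])+\tau([h],[h_j])$, which stays bounded. Then \eqref{eq:Vnorm} gives $\sup_j\|V_{h_j}\|\le C<\infty$. With this bound in hand, it suffices to prove $V_{h_j}u\to V_hu$ for $u$ in a dense subspace $\mathcal D\subset H^{\frac12}(S^1)$. Indeed, for general $u$ and $v\in\mathcal D$,
\[
\|V_{h_j}u-V_hu\|\le C\|u-v\|+\|V_{h_j}v-V_hv\|+\|V_h\|\,\|u-v\|.
\]
We take $\mathcal D$ to be the smooth functions, or simply the trigonometric polynomials.

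Next we pass to a convenient topology on the maps. Since the $h_j,h$ are normalized (fixing $1,i,-1$) and converge in the Teichmüller metric, we can choose quasiconformal extensions $F_j,F$ to $\D$ with uniformly bounded dilatation, normalized at three boundary points. By compactness of normalized $K$-quasiconformal families, every subsequence of $(F_j)$ has a further subsequence converging locally uniformly on $\overline{\D}$. The limit is a quasiconformal map with boundary values in the Teichmüller class of $h$. Normalization and Teichmüller convergence force those boundary values to equal $h$ itself; this is the step where the normalization is used to kill the Möbius ambiguity. It follows that $h_j\to h$ uniformly on $S^1$.

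For smooth $v$, the uniform convergence gives $v\circ h_j\to v\circ h$ uniformly, so in $L^2$. It does not directly give convergence in $H^{\frac12}$, and this is the main obstacle. I would bypass it with a Hilbert-space weak-compactness argument. The sequence $V_{h_j}v$ is bounded in $H^{\frac12}$, so along subsequences it converges weakly. Since it converges in $L^2$ to $V_hv$, every weak limit must be $V_hv$; hence $V_{h_j}v\rightharpoonup V_hv$. To upgrade weak convergence to norm convergence, I would show $\|V_{h_j}v\|\to\|V_hv\|$ using the double-integral formula for the $H^{\frac12}$ norm. Alternatively, write $\|V_{h_j}v\|^2=\omega(V_{h_j}v,\,V_{h_j}Hv')$, where $v'$ is the real part, treating real and imaginary parts separately. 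Here I use the identity $\|u\|^2=\omega(u,Hu)$ up to sign, together with symplectic invariance $\omega(V_ku,V_kw)=\omega(u,w)$ from Theorem~\ref{thm:NS-boundedness}. But $H$ does not commute with $V_{h_j}$, so this does not close immediately.

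A cleaner route, which I would pursue first, is to change variables in the double integral:
\[
\|v\circ h_j\|^2=\frac1{4\pi^2}\iint\frac{|v(\zeta)-v(\eta)|^2}{|h_j^{-1}(\zeta)-h_j^{-1}(\eta)|^2}\,|d h_j^{-1}(\zeta)|\,|dh_j^{-1}(\eta)|.
\]
Equivalently, I would use the Dirichlet integral of $P v\circ F_j$, where $F_j$ are quasiconformal extensions chosen close to $F$ in dilatation. The composition $Pv\circ F_j$ is an $H^1$ extension of $v\circ h_j$, so its Dirichlet energy dominates $\|v\circ h_j\|^2$. For smooth $v$, $\nabla(Pv\circ F_j)$ converges in $L^2$ to $\nabla(Pv\circ F)$: it is dominated by $|\nabla Pv|\,|DF_j|$, and $DF_j\to DF$ in $L^2$ when the dilatations converge a.e. with uniform bounds, by a Bojarski/Beltrami-stability argument. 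This yields convergence of $Pv\circ F_j$ to $Pv\circ F$ in the Dirichlet space. Taking boundary traces, which are bounded from Dirichlet space to $H^{\frac12}$ and commute with the limit, gives $V_{h_j}v\to V_hv$ in norm. The choice of $F_j$ with $\mu_{F_j}\to\mu_F$ a.e. (via Teichmüller convergence, $\mu_{F_j}=\mu_{\text{extremal}}$ composed suitably) is the delicate point I expect to need the most care.
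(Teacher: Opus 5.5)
\textbf{Verdict.} Your overall approach is sound, but as written the proof is incomplete at its one nontrivial step; the paper itself gives no argument and only cites \cite[Proposition~4.1]{ShenWP}. The approach is: a uniform bound plus density, then control of $\|v\circ h_j-v\circ h\|_{H^{\frac12}}^2$ by the Dirichlet energy of the extension $Pv\circ F_j-Pv\circ F$. The weak-compactness and symplectic detours can simply be dropped.

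\textbf{The gap.} You need extensions $F_j$ of $h_j$ with $DF_j\to DF$ in $L^2(\D)$. Both their construction and the $L^2$ convergence are left to an unspecified ``Bojarski/Beltrami-stability argument.'' That appeal can be made to work, by reflecting to $\widehat{\C}$ and using $L^p$ stability of normalized solutions. However, transferring it to three-boundary-point normalized self-maps of $\D$ is exactly the care you postpone, and it is unnecessary.

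\textbf{A short way to close it.} Put $g_j=h_j\circ h^{-1}$. Then $V_{h_j}=V_hV_{g_j}$, so $V_{h_j}u-V_hu=V_h(V_{g_j}u-u)$, and it suffices to treat $g_j\to\id$.

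Teichm\"uller convergence gives quasiconformal self-maps $G_j$ of $\D$ with boundary values $g_j$ and $k_j=\|\mu_{G_j}\|_\infty\to0$. Hence $\|V_{g_j}\|\le e^{\tau([h],[h_j])}$ is bounded. Since the $G_j$ fix $1,i,-1$, every subsequential limit is conformal with three fixed boundary points, so $G_j\to\id$ uniformly on $\overline{\D}$.

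Next, $\int_\D|\partial G_j|^2\le\pi/(1-k_j^2)$ because $\int_\D J_{G_j}=\pi$. Also $\int_\D|\bar\partial G_j|^2\le k_j^2\pi/(1-k_j^2)$. Green's formula gives $\int_\D\partial G_j\,dA=\frac i2\oint_{S^1}g_j\,d\bar z\to\pi$. Therefore
\[
\int_\D|\partial G_j-1|^2\,dA=\int_\D|\partial G_j|^2\,dA-2\operatorname{Re}\int_\D\partial G_j\,dA+\pi\longrightarrow0.
\]
For a trigonometric polynomial $v$, the extension $U=Pv$ is smooth on $\overline{\D}$, and
\[
\|\nabla(U\circ G_j)-\nabla U\|_{L^2(\D)}\le\|\nabla U\circ G_j-\nabla U\|_{L^\infty(\D)}\|DG_j\|_{L^2(\D)}+\|\nabla U\|_{L^\infty(\D)}\|DG_j-I\|_{L^2(\D)}\longrightarrow0.
\]
By the Dirichlet principle, $\|V_{g_j}v-v\|_{H^{\frac12}}\to0$, and your density argument finishes.

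If you prefer to keep general $h$, your ``composed suitably'' is $F_j=G_j\circ F$. Then $|DF|^2\le K(F)J_F$ and a change of variables give $\|DF_j-DF\|_{L^2}^2\le K(F)\|DG_j-I\|_{L^2}^2$.

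With this step supplied your argument is complete. It uses only the Dirichlet-energy description of the $H^{\frac12}$ norm, the same mechanism behind \eqref{eq:Vnorm}, rather than any continuity theory for the Beltrami equation.
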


This is \cite[Proposition~4.1]{ShenWP}. The distinction between
strong-operator continuity and operator-norm continuity is the starting point
of the present paper.

\subsection{Concentrating packets}

Fix a nonzero, nonconstant real-valued function
$\eta\in C_c^{\infty}((-1,1))$. For $\zeta_0=e^{i\theta_0}$ and sufficiently
small $\varepsilon>0$, define
\begin{equation}
 u_{\varepsilon,\zeta_0}(e^{i\theta})
 =\eta\!\left(\frac{\theta-\theta_0}{\varepsilon}\right),
\end{equation}
using a fixed coordinate arc around $\theta_0$.

\begin{lemma}[Critical scaling]\label{lem:critical-scaling}
There is a number $a_\eta\in(0,\infty)$ such that
\begin{equation}
 \|u_{\varepsilon,\zeta_0}\|_{H^{\frac{1}{2}}}^2\longrightarrow a_\eta
 \qquad(\varepsilon\downarrow0).
\end{equation}
Moreover,
\begin{equation}
 \|u_{\varepsilon,\zeta_0}\|_{L^1(S^{1})}
 =\varepsilon\|\eta\|_{L^1(\R)},
 \qquad
 \|u_{\varepsilon,\zeta_0}\|_{L^{\infty}(S^{1})}
 =\|\eta\|_{L^{\infty}(\R)}.
\end{equation}
\end{lemma}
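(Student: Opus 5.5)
The $L^1$ and $L^\infty$ identities are immediate from the change of variables $\theta=\theta_0+\varepsilon t$, since arc length on $S^1$ is $d\theta$. Indeed $\int_{S^1}|u_{\varepsilon,\zeta_0}|\,d\theta=\varepsilon\int_{\R}|\eta(t)|\,dt$, and the supremum is unchanged by an affine reparametrization. Here $\varepsilon<\pi$ is small enough that the support $(\theta_0-\varepsilon,\theta_0+\varepsilon)$ lies inside the fixed coordinate arc. The substance is the $H^{\frac12}$ limit, and I would use the double-integral (Gagliardo) form of the norm stated in the introduction.

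Write $\zeta=e^{i\theta}$, $\eta'=e^{i\phi}$ (renaming to avoid a clash with the bump $\eta$), and use $|e^{i\theta}-e^{i\phi}|^2=4\sin^2\frac{\theta-\phi}{2}$. This gives
\[
\|u_{\varepsilon,\zeta_0}\|_{H^{1/2}}^2=\frac1{4\pi^2}\int\!\!\int_{[-\pi,\pi]^2}\frac{|\eta(\theta/\varepsilon)-\eta(\phi/\varepsilon)|^2}{4\sin^2\frac{\theta-\phi}2}\,d\theta\,d\phi ,
\]
after translating $\theta_0$ to $0$ (this is harmless, since rotations preserve the norm). Next substitute $\theta=\varepsilon s$, $\phi=\varepsilon t$. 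The Jacobian $\varepsilon^2$ cancels against the denominator's leading behaviour $\varepsilon^2(s-t)^2$; this is the critical scaling. The integrand becomes
\[
\frac{|\eta(s)-\eta(t)|^2}{(s-t)^2}\cdot\frac{(\varepsilon(s-t)/2)^2}{\sin^2(\varepsilon(s-t)/2)},
\]
taken over $|s|,|t|\le\pi/\varepsilon$. The candidate limit is
\[
a_\eta=\frac1{4\pi^2}\int_{\R}\int_{\R}\frac{|\eta(s)-\eta(t)|^2}{(s-t)^2}\,ds\,dt .
\]
This quantity is finite because $\eta$ is Lipschitz with compact support. It is positive because $\eta$ is nonconstant.

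The main obstacle is justifying the passage to the limit, and I expect this to be the one delicate step. The integrand vanishes unless $s$ or $t$ lies in $(-1,1)$, and by symmetry it suffices to treat $s\in(-1,1)$. The correction factor $x^2/\sin^2x$, with $x=\varepsilon(s-t)/2$, is bounded by $\pi^2/4$ on $|x|\le\pi/2$. However, $|s-t|$ can reach about $2\pi/\varepsilon$, so $|x|$ can reach about $\pi$, where $\sin x$ vanishes. I would therefore split the integral at $|s-t|\le\pi/\varepsilon$.

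On the near region the factor is bounded. It also tends to $1$ pointwise as $\varepsilon\to0$. The dominating function $|\eta(s)-\eta(t)|^2/(s-t)^2$ is integrable, so dominated convergence gives convergence to $a_\eta$.

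On the far region we have $t\notin\operatorname{supp}\eta$, so the integrand equals $|\eta(s)|^2/(4\sin^2(\varepsilon(s-t)/2))\cdot\varepsilon^2$. I would not estimate this in the rescaled variables. Instead I would undo the substitution: it equals the original contribution from pairs with $|\theta-\phi|\ge\pi$ on the circle. Periodicity lets me choose the representative $\phi-\theta\in[-\pi,\pi]$ of the circle distance, and then the denominator is at least $4\sin^2(\pi/4)$ once the arc distance exceeds $\pi/2$. That contribution is therefore $O(\|u\|_{L^2}^2)=O(\varepsilon)$, which tends to $0$. (Equivalently, one can integrate over the torus with $\theta-\phi$ reduced mod $2\pi$ from the outset, which avoids the zero of $\sin$ entirely.) Combining the two regions yields $\|u_{\varepsilon,\zeta_0}\|_{H^{1/2}}^2\to a_\eta\in(0,\infty)$.
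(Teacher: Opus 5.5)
Your proof is correct, but it takes a different route from the paper's. The paper works on the Fourier side. It computes $\wh u_{\varepsilon,1}(n)=\frac{\varepsilon}{2\pi}\wh\eta_{\R}(n\varepsilon)$ exactly. This makes $\|u_{\varepsilon,1}\|_{H^{1/2}}^2=\frac{1}{4\pi^2}\sum_{n\neq0}\varepsilon|n\varepsilon|\,|\wh\eta_{\R}(n\varepsilon)|^2$ a Riemann sum with mesh $\varepsilon$ for $a_\eta=\frac1{4\pi^2}\int_{\R}|\xi|\,|\wh\eta_{\R}(\xi)|^2\,d\xi$; convergence is left implicit and relies on the rapid decay of $\wh\eta_{\R}$.

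You instead work in physical space with the Gagliardo double integral, rescale, and pass to the limit by dominated convergence. This produces $a_\eta$ as the Gagliardo seminorm of $\eta$ on $\R$. By Plancherel, $\iint|\eta(s)-\eta(t)|^2(s-t)^{-2}\,ds\,dt=\int|\xi|\,|\wh\eta_{\R}(\xi)|^2\,d\xi$, so your constant and the paper's coincide.

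The paper's argument is shorter and exploits the exact multiplier structure. Yours uses only that $\eta$ is Lipschitz with compact support, and it makes finiteness and positivity of $a_\eta$ transparent. It also shows explicitly that interactions at distance of order one contribute only $O(\|u\|_{L^2}^2)=O(\varepsilon)$. This is the same localization mechanism the paper uses later in Lemma~\ref{lem:separated-supports}.

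One simplification: once you restrict to $s\in(-1,1)$ and $|t|\le\pi/\varepsilon$, you have $|x|\le\pi/2+\varepsilon/2$. So $\sin x$ never vanishes there, and your remark that $|x|$ can reach about $\pi$ does not apply in that region. The factor $x^2/\sin^2x$ is uniformly bounded. In the far part $\eta(t)=0$, so a single dominated-convergence argument with dominating function $C|\eta(s)-\eta(t)|^2/(s-t)^2$ already handles it. Your split into near and far regions is harmless but not needed.
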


\begin{proof}
By rotation invariance, take $\theta_0=0$. Let
\[
 \wh\eta_{\R}(\xi)=\int_{\R}\eta(x)e^{-i\xi x}\,dx.
\]
For sufficiently small $\varepsilon$,
\[
 \wh u_{\varepsilon,1}(n)=\frac{\varepsilon}{2\pi}
 \wh\eta_{\R}(n\varepsilon).
\]
Consequently,
\begin{align*}
 \|u_{\varepsilon,1}\|_{H^{\frac{1}{2}}}^2
 &=\frac{\varepsilon^2}{4\pi^2}
   \sum_{n\neq0}|n|\,|\wh\eta_{\R}(n\varepsilon)|^2\\
 &=\frac1{4\pi^2}\sum_{n\neq0}
   \varepsilon |n\varepsilon|\,|\wh\eta_{\R}(n\varepsilon)|^2.
\end{align*}
The last expression is a Riemann sum for
\[
 a_\eta=\frac1{4\pi^2}\int_{\R}|\xi|\,|\wh\eta_{\R}(\xi)|^2\,d\xi.
\]
It is positive because $\eta$ is nonconstant. The remaining identities are
immediate.
\end{proof}

Set
\begin{equation}
 v_{\varepsilon,\zeta_0}
 =\frac{u_{\varepsilon,\zeta_0}}
 {\|u_{\varepsilon,\zeta_0}\|_{H^{\frac{1}{2}}}}.
\end{equation}
Then $\|v_{\varepsilon,\zeta_0}\|_{H^{\frac{1}{2}}}=1$, their $L^1$ norms tend to
zero, and their $L^{\infty}$ norms remain bounded.

\begin{lemma}[Weak convergence]\label{lem:weak}
For each fixed $\zeta_0\in S^{1}$,
\[
 v_{\varepsilon,\zeta_0}\rightharpoonup0
 \quad\text{weakly in }H^{\frac{1}{2}}(S^{1})
\]
as $\varepsilon\downarrow0$.
\end{lemma}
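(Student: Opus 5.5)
The plan is the standard one: use uniform boundedness together with convergence against a dense set. By Lemma~\ref{lem:critical-scaling}, $\|v_{\varepsilon,\zeta_0}\|_{H^{\frac12}}=1$ for all small $\varepsilon$. In a Hilbert space it therefore suffices to show that $\langle v_{\varepsilon,\zeta_0},w\rangle_{H^{\frac12}}\to0$ for every $w$ in a dense subset of $H^{\frac12}(S^1)$. A $3\epsilon$-argument then extends this to all $w$, since $|\langle v_\varepsilon,w-w'\rangle|\le\|w-w'\|_{H^{\frac12}}$.

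For the dense subset I take trigonometric polynomials, or more generally smooth $w$. The inner product associated with \eqref{eq:Hhalf-intro} is
\[
\langle v,w\rangle_{H^{\frac12}}=\sum_{n\neq0}|n|\,\wh v(n)\overline{\wh w(n)}.
\]
For smooth $w$ the sequence $|n|\wh w(n)$ is the Fourier sequence of the smooth function $\Lambda w=\sum_{n\ne0}|n|\wh w(n)e^{in\theta}$, where $\Lambda=(-\Delta)^{1/2}$ and $\Lambda w=H w'$ up to a sign. Parseval then gives
\[
\langle v_{\varepsilon,\zeta_0},w\rangle_{H^{\frac12}}=\frac1{2\pi}\int_0^{2\pi}v_{\varepsilon,\zeta_0}\,\overline{\Lambda w}\,d\theta ,
\]
up to the mean of $v_{\varepsilon,\zeta_0}$, which pairs only with $\wh{\Lambda w}(0)=0$ and so is harmless, consistent with working modulo constants. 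Since $\Lambda w\in C^\infty\subset L^\infty$,
\[
|\langle v_{\varepsilon,\zeta_0},w\rangle|\le\frac1{2\pi}\|v_{\varepsilon,\zeta_0}\|_{L^1}\|\Lambda w\|_{L^\infty}.
\]
By Lemma~\ref{lem:critical-scaling},
\[
\|v_{\varepsilon,\zeta_0}\|_{L^1}=\frac{\varepsilon\|\eta\|_{L^1(\R)}}{\|u_{\varepsilon,\zeta_0}\|_{H^{\frac12}}}\sim \varepsilon\|\eta\|_{L^1}a_\eta^{-1/2}\to0,
\]
using $a_\eta>0$. Hence the pairing tends to zero.

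If I use only trigonometric polynomials, the computation is even simpler: $\wh v_\varepsilon(n)=O(\varepsilon)$ uniformly in $n$, by the explicit formula in the proof of Lemma~\ref{lem:critical-scaling} or by the $L^1$ bound, and a finite sum of such terms tends to zero.

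There is no serious obstacle; the argument is essentially the observation that the critical scaling keeps the $H^{\frac12}$ norm fixed while the $L^1$ mass vanishes. The only points needing care are bookkeeping ones. First, the pairing must be correctly identified so that constants are irrelevant, which holds because the $n=0$ mode carries weight $|0|=0$. Second, the closing density step must use the uniform bound $\|v_\varepsilon\|=1$, which is already in hand.
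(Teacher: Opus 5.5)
Your proposal is correct and follows the paper's proof: the uniform bound $\|v_{\varepsilon,\zeta_0}\|_{H^{\frac12}}=1$, convergence of each fixed Fourier coefficient to $0$ (the paper uses $\wh u_{\varepsilon,\zeta_0}(n)=O(\varepsilon)$ and that the normalizing denominator tends to $a_\eta^{1/2}>0$), and then density of trigonometric polynomials. Your main variant, which pairs against smooth $w$ through $\Lambda w=Hw'$ and the $L^1$--$L^\infty$ bound, is only a slightly more general form of the same observation.
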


\begin{proof}
It is enough to test against trigonometric polynomials modulo constants. For
each fixed $n\neq0$,
\[
 \wh u_{\varepsilon,\zeta_0}(n)=O(\varepsilon).
\]
The denominator in the definition of $v_{\varepsilon,\zeta_0}$ converges to
a positive number. Hence every fixed nonzero Fourier coefficient tends to
zero. The family is bounded in $H^{\frac{1}{2}}(S^{1})$, and density of complex
trigonometric polynomials finishes the proof.
\end{proof}

\begin{lemma}[Separated supports]\label{lem:separated-supports}
Let $E,F\subset S^{1}$ satisfy $\operatorname{dist}(E,F)\geq d>0$. If
$u,v\in H^{\frac{1}{2}}_{\R}(S^{1})\cap L^1(S^{1})$ have representatives supported
in $E$ and $F$, respectively, then
\begin{equation}
 |\langle u,v\rangle_{H^{\frac{1}{2}}}|
 \leq c_0d^{-2}\|u\|_{L^1}\|v\|_{L^1}.
\end{equation}
\end{lemma}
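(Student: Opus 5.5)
We use the double-integral (Douglas) form of the $H^{\frac12}$ norm. Polarization gives, for real $u,v$,
\[
 \langle u,v\rangle_{H^{\frac12}}
 =\frac1{4\pi^2}\int_{S^1}\int_{S^1}
 \frac{(u(\zeta)-u(\xi))(v(\zeta)-v(\xi))}{|\zeta-\xi|^2}\,|d\zeta||d\xi|.
\]
This identity holds for all $u,v\in H^{\frac12}_{\R}$ by density, and the integrand is absolutely integrable by Cauchy--Schwarz. The pairing is independent of the additive constants, so we may take the representatives that vanish off $E$ and off $F$ respectively.

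Next we expand the product
\[
(u(\zeta)-u(\xi))(v(\zeta)-v(\xi))
=u(\zeta)v(\zeta)+u(\xi)v(\xi)-u(\zeta)v(\xi)-u(\xi)v(\zeta).
\]
Since $E\cap F=\emptyset$, the diagonal terms $u(\zeta)v(\zeta)$ and $u(\xi)v(\xi)$ vanish identically. The two cross terms are supported where one variable lies in $E$ and the other in $F$. On that set $|\zeta-\xi|\ge d$, so the kernel is bounded by $d^{-2}$.

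Splitting the integral into the cross terms requires each piece to be absolutely integrable. That holds because the cross terms live on $\{\zeta\in E,\xi\in F\}\cup\{\zeta\in F,\xi\in E\}$, where the kernel is bounded and $u,v\in L^1$. Fubini then gives
\[
 \langle u,v\rangle_{H^{\frac12}}
 =-\frac{2}{4\pi^2}\int_E\int_F\frac{u(\zeta)v(\xi)}{|\zeta-\xi|^2}\,|d\xi||d\zeta|.
\]
Hence
\[
|\langle u,v\rangle|\le \frac{1}{2\pi^2}\,d^{-2}\,\|u\|_{L^1}\|v\|_{L^1},
\]
which is the claim with $c_0=1/(2\pi^2)$.

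The main obstacle is justifying the expansion. The full integrand $(u(\zeta)-u(\xi))(v(\zeta)-v(\xi))|\zeta-\xi|^{-2}$ is integrable, and on the complement of the cross-term set it is identically zero. Therefore the integral simply equals the integral over the cross-term set, where everything is bounded and the argument above applies.

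One check remains: that the Fourier-series normalization of $\langle\cdot,\cdot\rangle_{H^{\frac12}}$ matches the double integral with constant $1/(4\pi^2)$. This matching is exactly the equivalence stated after \eqref{eq:Hhalf-intro}, polarized.
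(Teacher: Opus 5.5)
Your proof is correct and follows the same route as the paper: polarize the double-integral form of the $H^{\frac12}$ norm, observe that only the cross terms on $E\times F$ and $F\times E$ survive, and bound the kernel by $d^{-2}$. Your reduced identity with the factor $-2/(4\pi^2)$ is the accurate one. The paper's display writes $+1/(4\pi^2)$, dropping that factor, which is harmless here since only $|\langle u,v\rangle_{H^{\frac12}}|$ with an unspecified $c_0$ is needed.
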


\begin{proof}
The integrand is nonzero only when one variable lies
in $E$ and the other lies in $F$. After interchanging the variables in one of
the two contributions,
\[
 \langle u,v\rangle_{H^{\frac{1}{2}}}
 =\frac{1}{4\pi^2}\int_E\int_F\frac{u(\zeta)v(\eta)}{|\zeta-\eta|^2}
 \,|d\zeta|\,|d\eta|.
\]
Estimate the $|\zeta-\eta|$ from below by $d^2$.
\end{proof}

\subsection{Essential-norm separation}

\begin{proof}[Proof of Theorem~\ref{thm:separation}]
Choose $\zeta_0\in S^{1}$ such that
\[
 x_h=h^{-1}(\zeta_0)\neq k^{-1}(\zeta_0)=x_k.
\]
Let $v_\varepsilon=v_{\varepsilon,\zeta_0}$. The support of $V_hv_\varepsilon$
is $h^{-1}(\operatorname{supp}v_\varepsilon)$, which shrinks to $x_h$.
Similarly, the support of $V_kv_\varepsilon$ shrinks to $x_k$. For
sufficiently small $\varepsilon$, these two supports are separated by a fixed
positive distance.

The functions $v_\varepsilon$ have uniformly bounded $L^{\infty}$ norms.
Since the lengths of both support arcs tend to zero,
\[
 \|V_hv_\varepsilon\|_{L^1}\to0,
 \qquad
 \|V_kv_\varepsilon\|_{L^1}\to0.
\]
By Lemma~\ref{lem:separated-supports},
\begin{equation}\label{eq:orthogonal}
 \langle V_hv_\varepsilon,V_kv_\varepsilon\rangle_{H^{\frac{1}{2}}}\to0.
\end{equation}
On the other hand,
\[
 \|V_hv_\varepsilon\|_{H^{\frac{1}{2}}}\geq m(V_h),
 \qquad
 \|V_kv_\varepsilon\|_{H^{\frac{1}{2}}}\geq m(V_k).
\]
Therefore
\begin{align*}
 \|(V_h-V_k)v_\varepsilon\|_{H^{\frac{1}{2}}}^2
 &=\|V_hv_\varepsilon\|_{H^{\frac{1}{2}}}^2
   +\|V_kv_\varepsilon\|_{H^{\frac{1}{2}}}^2
   -2\operatorname{Re}\langle V_hv_\varepsilon,V_kv_\varepsilon\rangle_{H^{\frac{1}{2}}}\\
 &\geq m(V_h)^2+m(V_k)^2-o(1).
\end{align*}
This proves the operator-norm lower bound.

Let $K$ be compact on $H^{\frac{1}{2}}(S^{1})$. By
Lemma~\ref{lem:weak}, $Kv_\varepsilon\to0$ in norm. Hence
\[
 \|(V_h-V_k)-K\|
 \geq\liminf_{\varepsilon\downarrow0}
 \|(V_h-V_k-K)v_\varepsilon\|
 \geq\bigl(m(V_h)^2+m(V_k)^2\bigr)^{1/2}.
\]
Taking the infimum over compact $K$ proves the theorem.
\end{proof}

\begin{proof}[Proof of Corollary~\ref{cor:nowhere}]
Since $h_j\to h$ in the Teichm\"uller topology, the quantities
$\tau(0,[h_j])$ remain bounded. By \eqref{eq:inverse-minimum-bound}, the norms
$\|V_{h_j}^{-1}\|$ remain bounded, and therefore $m(V_{h_j})$ is bounded
below by a positive constant. Apply Theorem~\ref{thm:separation} to $h_j$
and $h$.

If $h=\id$, then
\[
 m(V_{h_j})=\|V_{h_j}^{-1}\|^{-1}
 \geq e^{-\tau(0,[h_j])}\longrightarrow1,
\]
which gives the lower bound $\sqrt2$. The final statement is
Theorem~\ref{thm:strong-cont}.
\end{proof}

\subsection{A smooth path}

For completeness, we record an explicit nonconstant smooth path to which
the preceding corollary applies.

\begin{proposition}\label{prop:smoothpath}
For $|t|<1/4$, define
\[
 h_t(e^{i\theta})=e^{i(\theta+t\sin2\theta)}.
\]
Then $h_t$ is a normalized quasisymmetric homeomorphism,
$h_t\to\id$ in the Teichm\"uller metric, and
\[
 \liminf_{t\to0,\,t\neq0}\|V_{h_t}-I\|_{\ess}\geq\sqrt2.
\]
\end{proposition}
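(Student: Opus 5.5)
The plan is to check the three assertions in turn, with the third reduced to Corollary~\ref{cor:nowhere}.

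\emph{Homeomorphism and normalization.} Write $h_t(e^{i\theta})=e^{i\phi_t(\theta)}$ with $\phi_t(\theta)=\theta+t\sin2\theta$. Then
\[
\phi_t'(\theta)=1+2t\cos2\theta\in[1-2|t|,\,1+2|t|]\subset[\tfrac12,\tfrac32]
\qquad(|t|<1/4).
\]
Hence $\phi_t$ is strictly increasing. Since $\phi_t(\theta+2\pi)=\phi_t(\theta)+2\pi$, $h_t$ is an orientation-preserving real-analytic diffeomorphism of $S^1$.

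A diffeomorphism with derivative bounded above and below is bi-Lipschitz, so it is quasisymmetric: $|h_t(I)|$ is comparable to $|I|$ with constants in $[1-2|t|,1+2|t|]$. For the normalization, $\sin2\theta$ vanishes at $\theta=0,\pi/2,\pi$, so $h_t$ fixes $1$, $i$ and $-1$.

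\emph{Convergence in the Teichm\"uller metric.} I would use an explicit quasiconformal extension and bound its dilatation by $O(|t|)$. The cleanest choice is the radial extension
\[
F_t(re^{i\theta})=re^{i\phi_t(\theta)}\quad\text{on }\D .
\]
In log-polar coordinates $s=\log r$, $F_t$ is $(s,\theta)\mapsto(s,\phi_t(\theta))$, which is conformally equivalent. Its Beltrami coefficient is therefore
\[
\mu=\frac{\phi_t'-1}{\phi_t'+1}\,e^{2i\theta}, \qquad\text{so}\qquad |\mu|\le\frac{2|t|}{2-2|t|}=O(|t|).
\]
The origin is a single point, fixed by $F_t$, and does not affect quasiconformality. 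Thus $K(F_t)\le (1+O(|t|))/(1-O(|t|))$. By \eqref{eq:Teich-distance-intro}, $\tau(0,[h_t])=O(|t|)\to0$, i.e.\ $h_t\to\id$ in the Teichm\"uller metric.

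Alternatively, the Beurling--Ahlfors extension works, since its dilatation is controlled by the quasisymmetry constant $M(h_t)\to1$. The radial computation is more explicit, and I would present it.

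\emph{The essential-norm bound.} For $t\ne0$ we have $h_t\ne\id$, for instance at $\theta=\pi/4$ where $\phi_t(\pi/4)=\pi/4+t$. Then Theorem~\ref{thm:separation} with $h=h_t$ and $k=\id$ gives
\[
\|V_{h_t}-I\|_{\ess}\ge\bigl(m(V_{h_t})^2+1\bigr)^{1/2}.
\]
By \eqref{eq:inverse-minimum-bound}, $m(V_{h_t})\ge e^{-\tau(0,[h_t])}\to1$. Taking $\liminf$ along any sequence $t\to0$ gives $\sqrt2$. This is exactly the second statement of Corollary~\ref{cor:nowhere}, applied along sequences $t_j\to0$.

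The only step needing genuine care is the dilatation computation in the second part: verifying the formula for $\mu$ in polar coordinates and checking the behaviour at the origin. The rest is direct.
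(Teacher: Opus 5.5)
Your proposal is correct and follows the paper's proof. The paper likewise checks $\phi_t'=1+2t\cos2\theta>0$ and the three fixed points, bounds the dilatation of an explicit extension by $|t|/(1-|t|)$ to get $\tau(0,[h_t])\to0$, and then invokes Theorem~\ref{thm:separation} via Corollary~\ref{cor:nowhere}, using $m(V_{h_t})\ge e^{-\tau(0,[h_t])}\to1$. The only differences are cosmetic. You use the radial extension $re^{i\phi_t(\theta)}$, whereas the paper uses $r\exp\bigl(i(\theta+tr^2\sin2\theta)\bigr)$, which is smooth at the origin. Your Beltrami coefficient also has the wrong sign: it should be $\frac{1-\phi_t'}{1+\phi_t'}e^{2i\theta}$. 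This is harmless, since only $|\mu|$ enters the argument.
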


\begin{proof}
The angular derivative is $1+2t\cos2\theta>0$, and the points $1,i,-1$ are
fixed. Define
\[
 F_t(re^{i\theta})=r\exp\!\bigl(i(\theta+tr^2\sin2\theta)\bigr).
\]
A direct calculation gives
\begin{align*}
 (F_t)_z&=e^{i(\Phi_t-\theta)}
 \bigl(1+tr^2\cos2\theta+itr^2\sin2\theta\bigr),\\
 (F_t)_{\bar z}&=tr^2e^{i(\Phi_t+\theta)}
 \bigl(-\cos2\theta+i\sin2\theta\bigr),
\end{align*}
where $\Phi_t=\theta+tr^2\sin2\theta$. Thus
\[
 \|\mu_{F_t}\|_{\infty}\leq\frac{|t|}{1-|t|}\longrightarrow0.
\]
Hence $h_t\to\id$ in the Teichm\"uller metric. The essential-norm conclusion
follows from Corollary~\ref{cor:nowhere}.
\end{proof}

\section{The universal period map through Grunsky coordinates}\label{sec:period}

\subsection{$V_h$ and its graph}

A bounded real-linear operator $J$ on $H^{\frac12}_{\R}(S^1)$ is called a
 complex structure compatible with the symplectic form $\omega$ if
\[
 J^2=-I,\qquad \omega(Ju,Jv)=\omega(u,v),\qquad
 \omega(u,Ju)>0\quad(u\ne0);
\] This definition is given in Nag--Sullivan~\cite[Section~7]{NagSullivan}, though we do not make essential use of it in the sequel.
Because $V_h$ is symplectic  operator on $H^{\frac{1}{2}}_{\R}(S^{1})$, the operator
\begin{equation}\label{eq:Jdef}
 J_h=V_hHV_h^{-1}
\end{equation}
is again a  complex structure compatible with $\omega$.
Its eigenspaces have a simple description.

\begin{lemma}\label{lem:eigenspaces}
The $-i$ eigenspace of $J_h$ is $V_h(H^{\frac{1}{2}}_{+})$, and the $i$ eigenspace is
$V_h(H^{\frac{1}{2}}_{-})$.
\end{lemma}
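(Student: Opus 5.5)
The plan is a direct linear-algebra verification using only the definition $J_h=V_hHV_h^{-1}$, the invertibility of $V_h$ from Theorem~\ref{thm:NS-boundedness}, and the block form of $H$ relative to $H^{\frac12}(S^1)=H^{\frac12}_+(S^1)\oplus H^{\frac12}_-(S^1)$. Throughout, $V_h$ denotes the complex-linear extension of the real pull-back. The eigenspaces are those of the complexified operator on $H^{\frac12}(S^1)$; the real structure is not needed here.

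First, I would prove the inclusion. Let $w\in H^{\frac12}_+$ and put $u=V_hw$. Then
\[
J_hu=V_hHV_h^{-1}V_hw=V_hHw=V_h(-iw)=-iV_hw=-iu,
\]
using complex linearity of $V_h$. Hence $V_h(H^{\frac12}_+)\subset\ker(J_h+iI)$. The same computation, with $Hw=iw$ for $w\in H^{\frac12}_-$, gives $V_h(H^{\frac12}_-)\subset\ker(J_h-iI)$.

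Next, I would prove the reverse inclusion. Suppose $J_hu=-iu$, and set $w=V_h^{-1}u$, which is legitimate because $V_h$ is a bounded automorphism. Applying $V_h^{-1}$ to $V_hHw=-iV_hw$ gives $Hw=-iw$. Write $w=w_++w_-$ in the decomposition. Then $Hw=-iw_++iw_-$, so $2iw_-=0$ and hence $w_-=0$. Thus $w\in H^{\frac12}_+$ and $u=V_hw\in V_h(H^{\frac12}_+)$. The $+i$ case is symmetric. Equivalently, $J_h$ has the two eigenvalues $\mp i$ with spectral projections $V_hP_\pm V_h^{-1}$.

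I do not expect a real obstacle. The only point that needs care is that the pull-back $V_h$ must be extended complex-linearly, so that it commutes with multiplication by $i$; otherwise the identity $V_h(-iw)=-iV_hw$ would fail. As a remark, both image spaces are closed, being images of closed subspaces under an isomorphism. Together they give the direct sum decomposition
\[
H^{\frac12}(S^1)=V_h(H^{\frac12}_+)\oplus V_h(H^{\frac12}_-),
\]
which is the "graph" picture used next in the paper.
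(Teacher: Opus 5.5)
Your proposal is correct and follows essentially the same route as the paper: verify $J_h(V_hf)=V_hHf=\mp iV_hf$ for $f\in H^{\frac12}_\pm$, then use invertibility of $V_h$ to get the full eigenspaces. The only difference is that you spell out the reverse inclusion, reducing $Hw=-iw$ to $w_-=0$, where the paper simply cites invertibility.
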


\begin{proof}
If $f\in H^{\frac{1}{2}}_{+}$, then $Hf=-if$, and hence
\[
 J_h(V_hf)=V_hHf=-iV_hf.
\]
The second assertion follows in the same way, and invertibility of $V_h$
gives the whole eigenspaces.
\end{proof}

We also introduce here the graph coordinate used in the statements below. For $h\in\QS$,  define
\begin{equation}\label{eq:ABdef}
 A_h=P_+V_h|_{H^{\frac{1}{2}}_{+}},\qquad
 B_h=P_-V_h|_{H^{\frac{1}{2}}_{+}}.
\end{equation}

\begin{theorem}[\cite{ShenWP}, Proposition~10.2]\label{thm:Ainvertible}
For every $h\in\QS$, the operator
\[
 A_h:H^{\frac{1}{2}}_{+}\longrightarrow H^{\frac{1}{2}}_{+}
\]
is a bounded isomorphism.
\end{theorem}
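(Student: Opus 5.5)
We want to prove that $A_h=P_+V_h|_{H^{1/2}_+}$ is a bounded isomorphism of $H^{\frac12}_+$ for every $h\in\QS$. The plan is to use only that $V_h$ is a bounded symplectic automorphism of $H^{\frac12}_{\R}(S^1)$ (Theorem~\ref{thm:NS-boundedness}). We pass to the Hermitian form
\[
 \langle\!\langle u,v\rangle\!\rangle=\sum_{n\neq0}n\,\wh u(n)\overline{\wh v(n)},
\]
which is $i$ times the sesquilinear extension of $\omega$, up to a constant. It is preserved by the complexified $V_h$. Boundedness of $A_h$ is immediate from $\|P_+\|\le1$ and the boundedness of $V_h$; the work is to show that $A_h$ is injective with closed range and surjective.

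\textbf{Step 1: a lower bound for $A_h$.} For $f\in H^{\frac12}_+$, write $V_hf=A_hf+B_hf$ with $A_hf\in H^{\frac12}_+$ and $B_hf\in H^{\frac12}_-$. On $H^{\frac12}_\pm$ the form $\langle\!\langle\cdot,\cdot\rangle\!\rangle$ equals $\pm\|\cdot\|^2_{H^{1/2}}$, and the two subspaces are $\langle\!\langle\cdot,\cdot\rangle\!\rangle$-orthogonal. Invariance of the form therefore gives
\[
 \|f\|^2=\|A_hf\|^2-\|B_hf\|^2 .
\]
Hence $\|A_hf\|\ge\|f\|$, so $A_h$ is injective with closed range. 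As a by-product, $\|B_hf\|\le\|A_hf\|$.

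\textbf{Step 2: surjectivity.} For surjectivity I will apply the same identity to the adjoint. The Hilbert-space adjoint of $V_h$ with respect to $\langle\cdot,\cdot\rangle_{H^{1/2}}$ is
\[
 V_h^*=\Theta V_h^{-1}\Theta, \qquad \Theta=iH=P_+-P_-,
\]
which is the self-adjoint unitary encoding $\langle\!\langle u,v\rangle\!\rangle=\langle\Theta u,v\rangle$. This formula follows from $V_h$ preserving $\langle\!\langle\cdot,\cdot\rangle\!\rangle$. It yields
\[
 A_h^*=P_+V_h^*|_{H^{1/2}_+}=P_+V_{h^{-1}}|_{H^{1/2}_+}=A_{h^{-1}} .
\]
Since $h^{-1}\in\QS$, Step 1 applied to $h^{-1}$ gives $\|A_h^*f\|\ge\|f\|$. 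So $A_h^*$ is injective, the range of $A_h$ is dense, and together with closed range $A_h$ is onto.

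The bounded inverse theorem then finishes the proof, with the explicit bound $\|A_h^{-1}\|\le1$.

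The main obstacle I expect is the careful verification of sign conventions. We must check that $V_h$, complexified, preserves $\langle\!\langle\cdot,\cdot\rangle\!\rangle$ with the stated signs on $H^{\frac12}_\pm$; this uses the fact that $h$ is orientation preserving, since a reversal would flip the sign. We must also check the adjoint identity, including that $V_h$ commutes with complex conjugation so that the real symplectic invariance transfers to the sesquilinear form. If the adjoint route proves awkward, the fallback for surjectivity is a continuity argument. One joins $h$ to $\id$ by a path in $\QS$, for instance via Beltrami coefficients $t\mu$. Along this path the uniform bound $\|A_{h_t}f\|\ge\|f\|$ from Step 1 holds, and the set of $t$ where $A_{h_t}$ is onto would be shown to be open and closed. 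The catch is that Corollary~\ref{cor:nowhere} shows $t\mapsto V_{h_t}$ is not norm continuous, so this would require norm continuity of $t\mapsto A_{h_t}$, which is not available here. That is why I prefer the adjoint argument.
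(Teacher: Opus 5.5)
Your proof is correct. The sign conventions you flagged check out:
\begin{itemize}
\item With $\omega^{s}(u,v)=\frac1{2\pi}\int_{S^1}u\,d\bar v$, a Fourier computation gives $\omega^{s}(u,v)=-i\sum_{n\neq0}n\,\wh u(n)\overline{\wh v(n)}$. So $\langle\!\langle\cdot,\cdot\rangle\!\rangle=i\,\omega^{s}$ equals $+\|\cdot\|^2$ on $H^{\frac12}_+$ and $-\|\cdot\|^2$ on $H^{\frac12}_-$.
\item Its invariance under the complexified $V_h$ follows from Theorem~\ref{thm:NS-boundedness}, because $V_h$ commutes with conjugation.
\item $V_h^*\Theta V_h=\Theta$, together with $\Theta|_{H^{1/2}_+}=I$ and $P_+\Theta=P_+$, gives $A_h^*=A_{h^{-1}}$.
\end{itemize}

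The paper gives no proof of this statement; it imports it from \cite{ShenWP}. Your indefinite-form (Krein-space) argument is therefore an independent, self-contained route that rests only on Theorem~\ref{thm:NS-boundedness}. It is not circular with respect to the rest of the paper. Proposition~\ref{prop:graph-grunsky} uses invertibility of $A_h$ as an input, in the step $u_n=A_h^{-1}e_n^+$. The Faber identity $A_hu_n=e_n^+$ alone would only show that $A_h$ has dense range.

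Your argument also gives more than the statement. Besides $\|A_h^{-1}\|\le1$, the identity $A_h^*A_h=I+B_h^*B_h$ gives, for $u=A_hf$,
\[
 \|Z_hu\|^2=\|u\|^2-\|A_h^{-1}u\|^2\le\bigl(1-\|V_h\|^{-2}\bigr)\|u\|^2 .
\]
So $\|Z_h\|<1$ without the Grunsky inequalities, which is exactly what the paper later needs to place $Z_h$ in $\mathbb{G}$. What the Faber--Grunsky route buys instead is the exact identification $Z_h=F_-G([h])F_+^{-1}$. That yields the sharper Grunsky bound and holomorphic dependence on $[h]$.

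Your fallback continuity argument is unnecessary, and you were right not to rely on it.
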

 We therefore define the graph
operator
\begin{equation}\label{eq:Zdef}
 Z_h=B_hA_h^{-1}:H^{\frac{1}{2}}_{+}\longrightarrow H^{\frac{1}{2}}_{-}.
\end{equation}
If $u=A_hf$, then $V_hf=u+Z_hu$, and hence
\begin{equation}\label{eq:graph}
 V_h(H^{\frac{1}{2}}_{+})=\{u+Z_hu:u\in H^{\frac{1}{2}}_{+}\}.
\end{equation}
It is easy to see that $Z_h$ also depends only on $[h]$. 
Thus $[h]\mapsto Z_h$ is well defined on $\T$.
 
 The Grunsky operator played an important role in the study of univalent function theory
and yielded a lot of properties of Teichm\"uller spaces as well.
We now briefly recall some facts concerning the Grunsky operator; see \cite{Pommerenke1975}. For detailed applications and its connection with Faber polynomials, we refer to our earlier paper \cite{LiuShenFaber}. 

We use the welding pair $(f_h,g_h)$ fixed in \eqref{eq:welding}; in
particular, $f_h\circ h=g_h$ on $S^{1}$, and the normalized exterior map
$g_h$ depends only on $[h]$.

The Grunsky coefficients $\alpha_{mn}(g_h)$ are determined by
\begin{equation}\label{eq:grunskyexp}
 \log\frac{g_h(z)-g_h(\zeta)}{z-\zeta}
 =-\sum_{m,n\geq1}\alpha_{mn}(g_h)z^{-m}\zeta^{-n}.
\end{equation}
Define the Grunsky operator $G([h])$
by
\[
\bigl(G([h])x\bigr)_m
=\sum_{n=1}^\infty
\sqrt{mn}\,\alpha_{mn}(g_h)x_n,
\qquad m\ge1,
\]
for $x=(x_n)_{n\ge1}\in\ell^2$.
 Its matrix representation is
\begin{equation}\label{eq:Gdef}
 G([h])=\bigl(\sqrt{mn}\,\alpha_{mn}(g_h)\bigr)_{m,n\geq1}.
\end{equation}

The Grunsky inequalities and quasiconformal extendibility give  $G([h])$ is bounded on $\ell^2$ and
$\|G([h])\|<1$.

Let $a=(a_n)_{n\geq1}\in \ell^2$ and
\begin{equation}\label{eq:Fourierunitaries}
 F_+(a)=\sum_{n\geq1}a_n e_n^+,
 \qquad
 F_-(a)=\sum_{n\geq1}a_n e_n^-,
\end{equation}
where
\[
 e_n^+(\theta)=\frac{e^{in\theta}}{\sqrt n},
 \qquad
 e_n^-(\theta)=\frac{e^{-in\theta}}{\sqrt n}.
\]
These are unitary maps from $\ell^2$ onto $H^{\frac{1}{2}}_{+}$ and
$H^{\frac{1}{2}}_{-}$, respectively.

\begin{proposition}[Faber--Grunsky graph identity]\label{prop:graph-grunsky}
For every $[h]\in\T$,
\begin{equation}\label{eq:graph-grunsky}
 F_-^{-1}Z_hF_+=G([h]).
\end{equation}
Equivalently, for every $n\geq1$,
\begin{equation}\label{eq:Zbasis}
 Z_he_n^+=\sum_{m\geq1}\sqrt{mn}\,\alpha_{mn}(g_h)e_m^-.
\end{equation}
\end{proposition}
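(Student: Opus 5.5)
The plan is to compute $Z_h$ on the basis vectors $e_n^+$ by writing down, for each $n$, an explicit element $w_n\in H^{\frac12}_+$ with $A_hw_n=e^{in\theta}$. The Faber polynomials of the exterior map $g_h$ provide such elements. Since $Z_h=B_hA_h^{-1}$, we then get $Z_h e^{in\theta}=B_hw_n$, and we read off its coefficients.

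\emph{Faber polynomials.} Let $\Phi_n$ be the Faber polynomials of $g_h$, defined by
\[
\log\frac{g_h(z)-w}{z}=-\sum_{n\ge1}\frac{\Phi_n(w)}{n}z^{-n}.
\]
Subtracting $\log(1-\zeta/z)=-\sum_n \zeta^nz^{-n}/n$ and comparing with \eqref{eq:grunskyexp} at $w=g_h(\zeta)$ gives, for $\zeta\in\Ds$,
\[
\Phi_n(g_h(\zeta))=\zeta^n+n\sum_{m\ge1}\alpha_{nm}(g_h)\zeta^{-m}.
\]
The coefficients $\alpha_{mn}$ are symmetric because the left side of \eqref{eq:grunskyexp} is symmetric in $z$ and $\zeta$.

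\emph{Functions on each side of the curve.} Put $w_n=\Phi_n\circ f_h$ on $\D$. This is holomorphic, and it has finite Dirichlet integral: it equals the area of the image of $\Omega_h$ under the polynomial $\Phi_n$ counted with multiplicity, and this is finite because $\Omega_h$ is bounded and $\Phi_n'$ is bounded there. So the trace of $w_n$ lies in $H^{\frac12}_+$ (modulo constants).

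On the exterior side, $\Phi_n\circ g_h-\zeta^n$ is holomorphic in $\Ds$ and vanishes at $\infty$. Its Dirichlet integral over $\{|\zeta|>R\}$ is controlled by the displayed expansion. Over $1<|\zeta|<R$ it is finite because $g_h$ maps this annulus onto a bounded region, and $\zeta^n$ is smooth there. Hence its trace lies in $H^{\frac12}_-$, with Fourier coefficients $n\alpha_{nm}(g_h)$ at $e^{-im\theta}$.

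\emph{Welding.} Since $\Omega_h$ is a quasidisk, $f_h$ and $g_h$ extend homeomorphically to $S^1$, and \eqref{eq:welding} gives $w_n\circ h=\Phi_n\circ f_h\circ h=\Phi_n\circ g_h$ on $S^1$. Therefore
\[
V_hw_n=e^{in\theta}+n\sum_{m\ge1}\alpha_{mn}(g_h)e^{-im\theta}.
\]
This yields $A_hw_n=e^{in\theta}$ and $B_hw_n=n\sum_m\alpha_{mn}e^{-im\theta}$. Using $A_h^{-1}$ from Theorem~\ref{thm:Ainvertible}, we obtain $Z_he^{in\theta}=n\sum_m\alpha_{mn}e^{-im\theta}$. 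Dividing by $\sqrt n$ and writing $e^{-im\theta}=\sqrt m\,e_m^-$ gives \eqref{eq:Zbasis}.

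Both sides of \eqref{eq:graph-grunsky} are bounded operators: $Z_h$ by Theorem~\ref{thm:Ainvertible} and boundedness of $V_h$, and $G([h])$ by the Grunsky inequality. They agree on the dense span of the $e_n^+$, so they are equal.

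The main obstacle is the analytic justification that identifies traces with compositions. Concretely, one must show that the $H^{\frac12}$ trace of $w_n$, pulled back by $V_h$, is exactly the boundary value of $\Phi_n\circ g_h$, and that the series converges in $H^{\frac12}_-$ rather than merely formally. I would handle this with the continuous boundary extension of $f_h$ and $g_h$ on the quasicircle, combined with the finite Dirichlet integrals. Convergence of $\Phi_n\circ g_h(r\zeta)$ as $r\downarrow1$ in $H^{\frac12}$ then gives the coefficient identification.
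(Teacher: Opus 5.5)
Your proposal is correct and follows essentially the same route as the paper. The shared steps are the Faber expansion $\Phi_n(g_h(\zeta))=\zeta^n+n\sum_m\alpha_{nm}\zeta^{-m}$, finiteness of $\int_{\Omega_h}|\Phi_n'|^2$ to place $\Phi_n\circ f_h$ in $H^{\frac12}_+$, welding to compute its pull-back by $V_h$, and reading off $A_h^{-1}e_n^+$ and then $Z_he_n^+$. The only differences are cosmetic: you normalize by $\Phi_n\circ f_h$ rather than $n^{-1/2}P_n\circ f_h$, and you check the exterior Dirichlet integral directly, whereas the paper bypasses that check by noting that $V_h$ already maps the trace into $H^{\frac12}$ and identifying Fourier with Laurent coefficients via Cauchy's formula on $|\zeta|=r\downarrow1$.
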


\begin{proof}
	Let $P_n$ be the $n$th Faber polynomial of the exterior map $g_h$, normalized by
	the following expression:
	\begin{equation}\label{eq:Faber-generating}
		\log\frac{g_h(z)-w}{z}
		=-\sum_{n\geq1}\frac1nP_n(w)z^{-n},
		\qquad(w\in\mathbb{C}, |z|\text{ large}).
	\end{equation}
Putting $w=g_h(\zeta)$ and using \eqref{eq:grunskyexp}, we obtain
\begin{align*}
 -\sum_{n\geq1}\frac1nP_n(g_h(\zeta))z^{-n}
 &=\log\left(1-\frac{\zeta}{z}\right)
   +\log\frac{g_h(z)-g_h(\zeta)}{z-\zeta}\\
 &=-\sum_{n\geq1}\frac{\zeta^n}{n}z^{-n}
   -\sum_{m,n\geq1}\alpha_{mn}(g_h)z^{-m}\zeta^{-n}.
\end{align*}
Comparing the coefficient of $z^{-n}$ gives the Faber expansion
\begin{equation}\label{eq:Faber-expansion}
 P_n(g_h(\zeta))
 =\zeta^n+n\sum_{m\geq1}\alpha_{nm}(g_h)\zeta^{-m},
 \qquad |\zeta|>1.
\end{equation}

Since $\Omega_h$ is bounded,  conformal invariance of the Dirichlet integral gives
\[
 \int_{\D}|(P_n\circ f_h(z))'|^2\,dm(z)
 =\int_{\Omega_h}|P_n'(w)|^2\,dm(w)<\infty.
\]
Hence $u_n(z)=n^{-1/2}P_n\circ f_h(z)$ has boundary trace in
$H^{\frac{1}{2}}_{+}$. Moreover,
$P_n\circ g_h=P_n\circ f_h\circ h=\sqrt n\,V_hu_n$ on $S^1$, so this
continuous boundary trace belongs to $H^{\frac{1}{2}}$. By Cauchy's formula on
$|\zeta|=r$ and passage to $r\downarrow1$, the Laurent coefficients in
\eqref{eq:Faber-expansion} are its Fourier coefficients; since the trace is
in $H^{\frac{1}{2}}$, the resulting Fourier series converges in $H^{\frac{1}{2}}$. Therefore,
\begin{align*}
 V_hu_n
 &=\frac1{\sqrt n}P_n\circ f_h\circ h
  =\frac1{\sqrt n}P_n\circ g_h\\
 &=e_n^++\sum_{m\geq1}\sqrt{mn}\,
   \alpha_{nm}(g_h)e_m^-.
\end{align*}
The Grunsky symmetry $\alpha_{nm}=\alpha_{mn}$ then gives
\[
 A_hu_n=e_n^+,
 \qquad
 B_hu_n=\sum_{m\geq1}\sqrt{mn}\,\alpha_{mn}(g_h)e_m^-.
\]
Since $A_h$ is invertible, $u_n=A_h^{-1}e_n^+$, and
\eqref{eq:Zbasis} follows. Equality on the orthonormal basis gives
\eqref{eq:graph-grunsky}.
\end{proof}

\begin{corollary}[Holomorphy of the graph coordinate]\label{cor:Zholomorphic}
The map
\[
 Z:\T\longrightarrow\Bop(H^{\frac{1}{2}}_{+},H^{\frac{1}{2}}_{-}),
 \qquad [h]\longmapsto Z_h,
\]
is holomorphic in operator norm.
\end{corollary}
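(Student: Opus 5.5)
By Proposition~\ref{prop:graph-grunsky}, $Z_h=F_-\,G([h])\,F_+^{-1}$, and $F_\pm$ are fixed unitaries. It therefore suffices to show that $[h]\mapsto G([h])$ is holomorphic from $\T$ into $\Bop(\ell^2)$ in operator norm. Since $\T$ carries the complex structure given by the Bers embedding, and $\pi:M(\D)\to\T$ is a holomorphic split submersion, we can work on $M(\D)$ instead. The plan is to show that $\mu\mapsto G(w^\mu|_{\Ds})$ is holomorphic from $M(\D)$ into $\Bop(\ell^2)$. Because this map is constant on Teichm\"uller classes, it then descends to a holomorphic map on $\T$: composing with a local holomorphic section of $\pi$ gives holomorphy on $\T$.

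For holomorphy on $M(\D)$ we use the standard criterion for Banach-valued maps: a locally bounded map is holomorphic if it is weakly holomorphic against a separating family of functionals. The family here is the matrix entries $x\mapsto\langle G e_n,e_m\rangle$, and our steps are as follows.

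\emph{Step 1: local boundedness.} We have $\|G(w^\mu|_{\Ds})\|\le 1$ for all $\mu$, by the Grunsky inequality cited above the statement. The sharper bound $\|G\|\le\|\mu\|_\infty$ is also available, but it is not needed.

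\emph{Step 2: holomorphy of each entry.} Each coefficient $\alpha_{mn}(w^\mu)$ is a polynomial in finitely many Laurent coefficients of $w^\mu$ at $\infty$, as one sees by expanding the logarithm in \eqref{eq:grunskyexp}. Each such Laurent coefficient is $\frac{1}{2\pi i}\int_{|z|=R}w^\mu(z)z^{k}\,dz$, and this depends holomorphically on $\mu$ by the Ahlfors--Bers theorem, since $\mu\mapsto w^\mu(z)$ is holomorphic, locally uniformly in $z$. Alternatively, holomorphy of the entries follows directly from the Bers embedding, because the Laurent coefficients of $g_h$ are determined holomorphically by $S(g_h)$ through the normalized solution of $y''+\tfrac12 S y=0$.

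\emph{Step 3: upgrading to operator-norm holomorphy.} A map $F:U\to\Bop(\ell^2)$ that is locally bounded and has holomorphic scalar functions $\langle F(\cdot)x,y\rangle$ for $x,y$ in the dense span of the basis is holomorphic in operator norm. The proof is short. Local boundedness together with density makes $\langle F(\cdot)x,y\rangle$ holomorphic for all $x,y$, as a locally uniform limit. Then Dunford's theorem, which says that weak holomorphy implies norm holomorphy for Banach-valued maps, applies. It applies because the functionals $T\mapsto\langle Tx,y\rangle$ form a norming set for $\Bop(\ell^2)$, and the Cauchy-integral argument that derives norm-continuous difference quotients from uniform boundedness goes through with a norming family in place of the full dual.

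The main obstacle is Step 3: weak-operator holomorphy plus boundedness must give \emph{norm} holomorphy, and one has to check that the Cauchy-estimate argument really only uses a norming set of functionals. Concretely, the plan is to write $F(\mu+t\nu)$ via the Cauchy formula on a small circle in $t$, interpreted weakly. Uniform boundedness of $F$ on that circle then bounds the second-order remainder in operator norm by $C|t|^2$, and this yields a Fr\'echet derivative. Should this become delicate in infinite dimensions, a fallback is to use the known holomorphic dependence $\mu\mapsto$ Grunsky operator proved via the representation of $G$ as a compressed Beurling transform, $G=P\,\mu(I-S\mu)^{-1}S\cdots$. In that representation the dependence is a norm-convergent Neumann series in $\mu$, so holomorphy is immediate.
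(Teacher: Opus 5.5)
Your proposal is correct, and its first step is the paper's: the Faber--Grunsky identity $Z_h=F_-G([h])F_+^{-1}$ reduces everything to the Grunsky map. From there the two routes differ.

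The paper simply cites the known operator-norm holomorphy of $[h]\mapsto G([h])$ (Krushkal, Shen, Takhtajan--Teo). You instead prove that fact directly, in four moves:
\begin{enumerate}
\item You lift to $M(\D)$.
\item You get holomorphy of each matrix entry from the Ahlfors--Bers theorem, since $\alpha_{mn}$ is a polynomial in the Laurent coefficients $b_1,\dots,b_{m+n-1}$ of $w^\mu$.
\item You use the Grunsky inequality $\|G\|\le1$ as a global bound and upgrade weak-operator holomorphy to norm holomorphy.
\item You descend to $\T$ through local holomorphic sections of the split submersion $\pi$.
\end{enumerate}

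The step you flag as the main obstacle works as you sketch it. The functionals $T\mapsto\langle Tx,y\rangle$ with $\|x\|,\|y\|\le1$ are norming for $\Bop(\ell^2)$, so the Cauchy estimates on each complex line make the difference quotients Cauchy in operator norm. G\^ateaux holomorphy together with local boundedness then gives Fr\'echet holomorphy on the Banach domain (Graves--Taylor--Hille--Zorn). Linearity of the G\^ateaux derivative in the direction follows because the matrix-entry functionals separate points. In fact, by the Arendt--Nikolski theorem, local boundedness plus holomorphy against the separating family of matrix entries alone would already suffice on each complex line, so your density step is only a convenience.

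The paper's route is shorter but black-boxes the one genuinely analytic ingredient. Yours is self-contained and makes explicit that the whole content is the uniform Grunsky bound plus holomorphy of finitely many Laurent coefficients. In particular, your fallback via a Beurling-transform representation is never needed.
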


\begin{proof}   Now
Proposition~\ref{prop:graph-grunsky} gives
\[
 Z_h=F_-G([h])F_+^{-1},
\]
and the Fourier maps are fixed and complex linear. It is known that the Grunsky operator depends
holomorphically on each point $h$ in the universal Teichm\"uller space; see \cite{Krushkal1989,ShenGrunsky,TakhtajanTeo}. Then $Z_h=B_hA_h^{-1}$ is holomorphic. 
\end{proof}
\begin{remark}
	In fact, Nag and Sullivan have proved that $ [h]\longmapsto Z_h $ is an injective, equivariant,
	holomorphic immersion
	\cite[Theorem~7.1]{NagSullivan} by   Rauch variational
	argument. 
	Here it is obtained through the
	Faber--Grunsky identity and the known operator-norm
	holomorphy of the Grunsky map. 
\end{remark}
\subsection{Bers--Grunsky biholomorphism}

The works of Takhtajan--Teo \cite[Appendix B]{TakhtajanTeo} and Krushkal \cite{Krushkal1989,Krushkal2017} already imply a biholomorphic Grunsky realization of the universal Teichm\"uller space. We briefly recall this in our normalization, in a form that will be used later for the complex-structure coordinate.

Recall from \eqref{eq:Bers} that $\beta([h])=S(g_h)$ identifies $\T$
biholomorphically with $\Omega\subset B(\Ds)$.
For $z\in\Ds$, put
\begin{equation}\label{eq:vz}
 v(z)=(\sqrt n\,z^{-n})_{n\geq1}\in\ell^2,
 \qquad
 [x,y]_0=\sum_{n\geq1}x_ny_n.
\end{equation}
The pairing $[\cdot,\cdot]_0$ is complex bilinear and continuous on
$\ell^2\times\ell^2$. For $T\in\Bop(\ell^2)$ define
\begin{equation}\label{eq:Edef}
 E(T)(z)=-6z^{-2}[Tv(z),v(z)]_0.
\end{equation}

\begin{lemma}\label{lem:E}
Formula \eqref{eq:Edef} defines a bounded complex-linear map
\[
 E:\Bop(\ell^2)\longrightarrow B(\Ds)
\]
satisfying
\begin{equation}\label{eq:Ebound}
 \|E(T)\|_B\leq6\|T\|.
\end{equation}
\end{lemma}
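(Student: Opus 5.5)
The plan is to write $E(T)$ as an explicit power series in $z^{-1}$, bound it pointwise using the norm of $v(z)$, and then check holomorphy and linearity.

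\emph{Pointwise bound.} For $z\in\Ds$ we have $\|v(z)\|_{\ell^2}^2=\sum_{n\ge1}n|z|^{-2n}$. Summing the series with $r=|z|^{-2}<1$ gives
\[
\sum_{n\ge1} n r^n=\frac{r}{(1-r)^2}=\frac{|z|^2}{(|z|^2-1)^2}.
\]
Hence $v(z)\in\ell^2$. Continuity of the bilinear pairing, $|[x,y]_0|\le\|x\|\|y\|$ by Cauchy--Schwarz, then gives
\[
|E(T)(z)|\le 6|z|^{-2}\|T\|\,\|v(z)\|^2=\frac{6\|T\|}{(|z|^2-1)^2}.
\]
Multiplying by $(|z|^2-1)^2$ and taking the supremum yields \eqref{eq:Ebound}.

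\emph{Holomorphy.} Write $T=(t_{mn})$ in the standard basis. Then
\[
[Tv(z),v(z)]_0=\sum_{m,n\ge1}\sqrt{mn}\,t_{mn}z^{-m-n}.
\]
This double series converges absolutely and locally uniformly on $\Ds$. Indeed, $|t_{mn}|\le\|T\|$, and $\sum_{m,n}\sqrt{mn}\,\rho^{m+n}<\infty$ for $\rho<1$. It follows that $z\mapsto[Tv(z),v(z)]_0$ is holomorphic on $\{|z|>1\}$. It is $O(z^{-2})$ at $\infty$, so $E(T)$ is holomorphic on $\Ds$, including at $\infty$, and is $O(z^{-4})$ there, consistent with the decay required of elements of $B(\Ds)$.

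An alternative route to holomorphy avoids the double series. The map $z\mapsto v(z)$ is a holomorphic $\ell^2$-valued function, since each coordinate is holomorphic and the map is locally bounded. Composing with the continuous bilinear form $(x,y)\mapsto[Tx,y]_0$ preserves holomorphy.

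\emph{Linearity.} Complex linearity in $T$ is immediate from the formula, since $[\cdot,\cdot]_0$ is bilinear and not sesquilinear. Boundedness then follows from \eqref{eq:Ebound}.

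No step is a real obstacle. The only points needing care are that the pairing is bilinear, so no conjugation enters, and the exact identity for $\|v(z)\|^2$, which is what produces the constant $6$ with no loss.
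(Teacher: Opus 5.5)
Your proposal is correct and follows the paper's proof. The exact identity $\|v(z)\|_{\ell^2}^2=|z|^2/(|z|^2-1)^2$, combined with Cauchy--Schwarz for the bilinear pairing, gives the bound with constant $6$, and holomorphy follows from $z\mapsto v(z)$ being a holomorphic $\ell^2$-valued map (your ``alternative route''); your double-series argument and the explicit check of holomorphy and $O(z^{-4})$ decay at $\infty$ are harmless additions that the paper leaves implicit.
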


\begin{proof}
Since
\[
 \|v(z)\|_{\ell^2}^2
 =\sum_{n=1}^{\infty}n|z|^{-2n}
 =\frac{|z|^2}{(|z|^2-1)^2},
\]
the Cauchy--Schwarz inequality gives
\[
 |E(T)(z)|
 \leq6|z|^{-2}\|T\|\|v(z)\|_{\ell^2}^2
 =\frac{6\|T\|}{(|z|^2-1)^2}.
\]
The map $z\mapsto v(z)$ is holomorphic as an $\ell^2$-valued map on $\Ds$,
so $E(T)$ is holomorphic. This proves \eqref{eq:Ebound} and the
asserted bounded complex linearity.
\end{proof}

\begin{proposition}[Bers--Grunsky recovery]\label{prop:recovery}
For every $[h]\in\T$,
\begin{equation}\label{eq:betaEG}
 \beta([h])=E(G([h])).
\end{equation}
Consequently,
\begin{equation}\label{eq:Bers-Grunsky-bound}
 \|\beta([h])-\beta([k])\|_B
 \leq6\|G([h])-G([k])\|.
\end{equation}
\end{proposition}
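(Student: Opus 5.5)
The identity \eqref{eq:betaEG} reduces to a classical formula expressing the Schwarzian derivative of $g=g_h$ through its Grunsky coefficients, and the estimate \eqref{eq:Bers-Grunsky-bound} then follows at once from linearity of $E$ and Lemma~\ref{lem:E}. The plan is therefore to unwind the definition \eqref{eq:Edef}, identify the double series with a derivative of the Grunsky generating function \eqref{eq:grunskyexp}, and compute that derivative on the diagonal.

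First I would expand the right-hand side of \eqref{eq:betaEG}. With $T=G([h])$ we have $(Tv(z))_m=\sum_n\sqrt{mn}\,\alpha_{mn}\sqrt n\,z^{-n}$, and hence
\[
 [G([h])v(z),v(z)]_0=\sum_{m,n\geq1}mn\,\alpha_{mn}(g_h)z^{-m}z^{-n}.
\]
Absolute convergence for $|z|>1$ is harmless: $\|G\|<1$ and the Cauchy--Schwarz bound from Lemma~\ref{lem:E} justify the rearrangement. Next, set $\Lambda(z,\zeta)=\log\frac{g(z)-g(\zeta)}{z-\zeta}$. This function is holomorphic on $\Ds\times\Ds$, including the diagonal, because $g$ is univalent. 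Differentiating \eqref{eq:grunskyexp} gives
\[
 \partial_z\partial_\zeta\Lambda(z,\zeta)=-\sum_{m,n}mn\,\alpha_{mn}z^{-m-1}\zeta^{-n-1}.
\]
Putting $\zeta=z$, we get $z^{-2}[Gv(z),v(z)]_0=-\partial_z\partial_\zeta\Lambda(z,z)$, and therefore $E(G)(z)=6\,\partial_z\partial_\zeta\Lambda(z,\zeta)|_{\zeta=z}$.

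The remaining step, which is also the main obstacle, is the classical identity
\[
 \partial_z\partial_\zeta\log\frac{g(z)-g(\zeta)}{z-\zeta}\Big|_{\zeta=z}=\tfrac16S(g)(z).
\]
I would prove it by Taylor expansion. Write $\zeta=z+t$. Then
\[
 \frac{g(z+t)-g(z)}{t}=g'(z)\Bigl(1+\tfrac{a}{2}t+\tfrac{b}{6}t^2+O(t^3)\Bigr),
 \qquad a=\frac{g''}{g'},\quad b=\frac{g'''}{g'}.
\]
It is cleaner to use the symmetric form
\[
 \partial_z\partial_\zeta\Lambda=\frac{g'(z)g'(\zeta)}{(g(z)-g(\zeta))^2}-\frac1{(z-\zeta)^2}.
\]
Expanding $(g(z)-g(\zeta))^2$ and $g'(z)g'(z+t)$ to order $t^2$, the $t^{-2}$ terms cancel, the $t^{-1}$ terms vanish, and the constant term is
\[
 \tfrac16\Bigl(b-\tfrac32a^2\Bigr)=\tfrac16S(g).
\]
This bookkeeping is the only place where care is needed.

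Combining the two computations gives $E(G([h]))=S(g_h)=\beta([h])$. Finally, since $E$ is complex linear, Lemma~\ref{lem:E} yields
\[
 \|\beta([h])-\beta([k])\|_B=\|E(G([h])-G([k]))\|_B\leq6\|G([h])-G([k])\|,
\]
which is \eqref{eq:Bers-Grunsky-bound}.
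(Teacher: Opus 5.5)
Your proposal is correct and follows the paper's argument: expand $[G([h])v(z),v(z)]_0$, differentiate the Grunsky generating function once in each variable, pass to the diagonal to obtain $S(g_h)(z)=-6\sum_{m,n\geq1}mn\,\alpha_{mn}(g_h)z^{-(m+n+2)}$, and deduce the estimate from the linearity of $E$ and Lemma~\ref{lem:E}. The only difference is that you carry out the Taylor-expansion check of the classical identity $\partial_z\partial_\zeta\log\frac{g(z)-g(\zeta)}{z-\zeta}\big|_{\zeta=z}=\frac16S(g)(z)$ and justify the rearrangement of the double series, whereas the paper simply asserts both.
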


\begin{proof}
 Differentiating \eqref{eq:grunskyexp} once in each variable
and then taking the removable diagonal limit $\zeta\to z$ gives
\begin{equation}\label{eq:Schwarzian-diagonal}
 S(g_h)(z)=-6\sum_{m,n\geq1}mn\alpha_{mn}(g_h)z^{-(m+n+2)}.
\end{equation}
By direct computation,
\begin{align*}
	E(G([h]))(z)
	&=-6z^{-2}\big[G([h])v(z),v(z)\big]_0\\
	&=-6z^{-2}\sum_{m,n\ge 1} mn\,\alpha_{mn}(g_h)\,z^{-(m+n)}\\
	&=-6\sum_{m,n\ge 1} mn\,\alpha_{mn}(g_h)\,z^{-(m+n+2)}\\
	&=S(g_h)(z)=\beta([h])(z).
\end{align*}
On the other hand, the estimate
\eqref{eq:Bers-Grunsky-bound} follows from Lemma~\ref{lem:E}.
\end{proof}

\begin{corollary}[The inverse of the Grunsky map]\label{cor:Ginverse}
The map
\[
 G:\T\longrightarrow G(\T)\subset\Bop(\ell^2)
\]
is  biholomorphism. Its inverse is
\begin{equation}\label{eq:Ginverse}
 G^{-1}=\beta^{-1}\circ E
 \qquad\text{on }G(\T).
\end{equation}
Moreover, the right-hand side is holomorphic on the open set
$E^{-1}(\Omega)\subset\Bop(\ell^2)$, which contains the Grunsky class.
\end{corollary}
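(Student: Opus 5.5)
The plan is to assemble the corollary from three ingredients already in place: the holomorphy of $G$ (Corollary~\ref{cor:Zholomorphic} together with Proposition~\ref{prop:graph-grunsky}), the recovery identity $\beta=E\circ G$ (Proposition~\ref{prop:recovery}), and the fact that $\beta:\T\to\Omega$ is a biholomorphism onto an open set.

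\textbf{Step 1: $G$ is injective, with a left inverse.} From \eqref{eq:betaEG}, $\beta\circ G^{-1}=E$ on the image, so $\beta^{-1}\circ E\circ G=\id_{\T}$. Since $\beta$ is injective, $G$ is injective. Thus $G:\T\to G(\T)$ is a bijection, and its inverse is $\beta^{-1}\circ E$ restricted to $G(\T)$, which is \eqref{eq:Ginverse}.

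\textbf{Step 2: the extension $\beta^{-1}\circ E$ is holomorphic on an open set.} By Lemma~\ref{lem:E}, $E$ is bounded and complex linear, hence continuous and holomorphic. Because $\Omega$ is open in $B(\Ds)$, the set $E^{-1}(\Omega)$ is open in $\Bop(\ell^2)$. It contains $G(\T)$, since $E(G([h]))=\beta([h])\in\Omega$. The composition $\beta^{-1}\circ E$ is then holomorphic on $E^{-1}(\Omega)$, because $\beta^{-1}:\Omega\to\T$ is holomorphic. This gives the final assertion.

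\textbf{Step 3: $G$ is a biholomorphism onto its image.} The map $G$ is holomorphic: $Z$ is holomorphic and the unitaries $F_\pm$ are fixed, so $G=F_-^{-1}ZF_+$ is holomorphic. Its inverse on $G(\T)$ is the restriction of a map holomorphic on a neighborhood. Together these show that $G$ is a biholomorphism onto its image $G(\T)$ in the sense of a split holomorphic embedding.

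Differentiating $\beta^{-1}\circ E\circ G=\id$ shows that $dG$ is injective at every point, with left inverse $d(\beta^{-1}\circ E)$. The range of $dG$ is complemented by the kernel of $d(\beta^{-1})\circ E$, so $G$ is a split immersion. Continuity of the left inverse shows that $G$ is a homeomorphism onto $G(\T)$ with the norm topology.

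\textbf{Main obstacle.} The only delicate point is interpreting "biholomorphism" when $G(\T)$ is not open in $\Bop(\ell^2)$. I would resolve this exactly as above: $G(\T)$ is a holomorphic submanifold, namely the image of a holomorphic map that has a holomorphic left inverse defined on an open neighborhood of the image.
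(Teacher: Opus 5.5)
Your proposal is correct and follows the paper's argument: the formula $G^{-1}=\beta^{-1}\circ E$ and injectivity come from \eqref{eq:betaEG} together with injectivity of $\beta$, and holomorphy of the extension comes from $E$ being bounded and linear, which makes $E^{-1}(\Omega)$ open. Your Step~3 spells out the split-immersion and homeomorphism-onto-image details that the paper leaves implicit here; the paper gives that argument only for $\Phi$ in Theorem~\ref{thm:local-reconstruction}. One caveat: holomorphy of $G$ is really the cited external input, because the paper derives Corollary~\ref{cor:Zholomorphic} from holomorphy of $G$, not the other way around.
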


\begin{proof}
Formula \eqref{eq:Ginverse} follows from \eqref{eq:betaEG} and
injectivity of the Bers embedding. Since $\Omega$ is open and $E$ is
bounded linear, $E^{-1}(\Omega)$ is open, and
$\beta^{-1}\circ E$ is holomorphic there.
\end{proof}

\subsection{Graph coordinates}

For two
independent operators, 
\[
L:H^{\frac{1}{2}}_{+}\to H^{\frac{1}{2}}_{-},
\qquad
W:H^{\frac{1}{2}}_{-}\to H^{\frac{1}{2}}_{+},
\]
we define $O$ as the open set of pairs $_{L,W}$ for which $I-WL$ and $I-LW$ are invertible.
 Set
\begin{equation}\label{eq:RLW}
 R_{L,W}=\begin{pmatrix}I&W\\ L&I\end{pmatrix}.
\end{equation}
Then for $(L,W)\in O$,
\begin{equation}\label{eq:Rinverse}
 R_{L,W}^{-1}
 =\begin{pmatrix}
 (I-WL)^{-1}&-W(I-LW)^{-1}\\
 -L(I-WL)^{-1}&(I-LW)^{-1}
 \end{pmatrix}.
\end{equation}
Let $C$ denote complex conjugation on $H^{\frac{1}{2}}$; it interchanges
$H^{\frac{1}{2}}_{+}$ and $H^{\frac{1}{2}}_{-}$. Thus, if
$Z:H^{\frac{1}{2}}_{+}\to H^{\frac{1}{2}}_{-}$ is complex linear, then
$\widetilde{Z}\doteq CZC:H^{\frac{1}{2}}_{-}\to H^{\frac{1}{2}}_{+}$ is also complex linear. 
Define the open graph domain
\[
 \mathbb{G}=\left\{Z\in\Bop(H^{\frac12}_+,H^{\frac12}_-):
 (Z,\widetilde{Z})\in O\right\}.
\]
It contains every $Z_h$. Indeed, Proposition~\ref{prop:graph-grunsky} and
the Grunsky inequalities give $\|Z_h\|<1$; since $C$ is isometric, both
$I-\widetilde{Z_h}Z_h$ and $I-Z_h\widetilde{Z_h}$ are  invertible by the Neumann
series. For $Z\in\mathbb{G}$, define
\begin{equation}\label{eq:JofZ}
 R_Z=\begin{pmatrix}I&\widetilde{Z}\\ Z&I\end{pmatrix},\qquad J(Z)=R_ZHR_Z^{-1}.
\end{equation}

\begin{proposition}[Graph and complex-structure coordinates]
\label{prop:graph-J}
The map $J:\mathbb{G}\to\Bop(H^{\frac12})$, $Z\mapsto J(Z)$, is real
analytic in operator norm. Its $-i$
eigenspace is the graph $\{u+Zu:u\in H^{\frac{1}{2}}_{+}\}$. 
With respect to the decomposition
$H^{\frac{1}{2}}=H^{\frac{1}{2}}_{+}\oplus H^{\frac{1}{2}}_{-}$,
the operator $J(Z)$ admits the block decomposition
\begin{equation}\label{eq:Jblock}
	J(Z)=
	\begin{pmatrix}
		-i(I+\widetilde{Z}Z)(I-\widetilde{Z}Z)^{-1} & 2i\widetilde{Z}(I-Z\widetilde{Z})^{-1}\\[6pt]
		-2iZ(I-\widetilde{Z}Z)^{-1} & i(I+Z\widetilde{Z})(I-Z\widetilde{Z})^{-1}
	\end{pmatrix}.
\end{equation}

Conversely, let $J$ satisfy that
$P_+Q_J|_{H^{1/2}_{+}}$ is invertible, where
\begin{equation}\label{eq:QJ}
 Q_J=\frac12(I+iJ).
\end{equation}
Define
\begin{equation}\label{eq:ZofJ}
 \mathcal{Z}(J)=P_-Q_J|_{H^{1/2}_{+}}
 \bigl(P_+Q_J|_{H^{1/2}_{+}}\bigr)^{-1}.
\end{equation}
Then $J\mapsto \mathcal{Z}(J)$ is holomorphic on this open set, and
\begin{equation}\label{eq:inverse-graph}
 \mathcal{Z}(J(Z))=Z.
\end{equation}
\end{proposition}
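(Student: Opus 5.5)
The plan is to compute directly with $2\times2$ block matrices relative to $H^{\frac12}=H^{\frac12}_+\oplus H^{\frac12}_-$, where $H=\operatorname{diag}(-iI,iI)$.

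\textbf{Step 1: block formula and eigenspace.} Write $R_Z$ with $L=Z$ and $W=\widetilde Z$. The inverse $R_Z^{-1}$ is given by \eqref{eq:Rinverse}. The product $R_ZHR_Z^{-1}$ is then a routine multiplication. For the top-left block we get
\[
-i(I-\widetilde ZZ)^{-1}+i\widetilde ZZ(I-\widetilde ZZ)^{-1}\cdot(-1)\cdot(-1).
\]
Careful sign tracking gives $-i(I+\widetilde ZZ)(I-\widetilde ZZ)^{-1}$, after using that $\widetilde ZZ$ commutes with $(I-\widetilde ZZ)^{-1}$. The push-through identity $Z(I-\widetilde ZZ)^{-1}=(I-Z\widetilde Z)^{-1}Z$ lets each off-diagonal block be written in the stated form, which yields \eqref{eq:Jblock}.

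The $-i$ eigenspace of $J(Z)$ is $R_Z(H^{\frac12}_+)$, by the same argument as Lemma~\ref{lem:eigenspaces}. Here $R_Z$ is invertible because $(Z,\widetilde Z)\in O$. Since $R_Z\binom{u}{0}=u+Zu$, this eigenspace is exactly the graph of $Z$.

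\textbf{Step 2: real analyticity of $J$.} The map $Z\mapsto\widetilde Z=CZC$ is real-linear and bounded, since $C$ is an isometry. Hence $Z\mapsto(Z,\widetilde Z)$ is real-linear. The map $(L,W)\mapsto R_{L,W}HR_{L,W}^{-1}$ is holomorphic in the pair on $O$, because inversion is holomorphic on invertible operators and multiplication is bilinear. Composing a holomorphic map with a bounded real-linear map gives a real-analytic map, so $J$ is real analytic on $\mathbb G$.

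\textbf{Step 3: the converse.} The map $J\mapsto Q_J$ is affine and complex-linear. Therefore $J\mapsto P_\pm Q_J|_{H^{\frac12}_+}$ is holomorphic. Invertibility of $P_+Q_J|_{H^{\frac12}_+}$ is an open condition, and inversion is holomorphic, so $\mathcal Z$ is holomorphic on that open set.

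For \eqref{eq:inverse-graph}, note that $Q_{J(Z)}=R_ZP_+R_Z^{-1}$, since $\frac12(I+iH)=P_+$. For $u\in H^{\frac12}_+$, write $R_Z^{-1}u=\binom{(I-\widetilde ZZ)^{-1}u}{-Z(I-\widetilde ZZ)^{-1}u}$. Applying $P_+$ keeps the first component $a=(I-\widetilde ZZ)^{-1}u$, and then $R_Z$ sends it to $a+Za$. Hence
\[
P_+Q_{J(Z)}|_{H^{\frac12}_+}=(I-\widetilde ZZ)^{-1},
\]
which is invertible, so $J(Z)$ lies in the domain of $\mathcal Z$. Also
\[
P_-Q_{J(Z)}|_{H^{\frac12}_+}=Z(I-\widetilde ZZ)^{-1}.
\]
The quotient in \eqref{eq:ZofJ} is therefore $Z$.

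The main obstacle is only bookkeeping: the signs and the ordering of the noncommuting factors in \eqref{eq:Jblock}. I will handle this with the push-through identity and check the result against $J(Z)^2=-I$ and against the case $Z=0$, where $J(0)=H$.
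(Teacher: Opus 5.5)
Your proposal is correct and follows the paper's proof essentially step for step: real analyticity comes from composing the holomorphic map $(L,W)\mapsto R_{L,W}HR_{L,W}^{-1}$ with $Z\mapsto(Z,\widetilde Z)$, \eqref{eq:Jblock} comes from block multiplication, and \eqref{eq:inverse-graph} comes from evaluating $Q_{J(Z)}=R_ZP_+R_Z^{-1}$ on $H^{\frac12}_+$; your appeal to the invertibility of $R_Z$ also proves full equality of the $-i$ eigenspace with the graph, where the paper checks only the inclusion. One bookkeeping slip: the cross term in the top-left block is $(i\widetilde Z)\bigl(-Z(I-\widetilde ZZ)^{-1}\bigr)=-i\widetilde ZZ(I-\widetilde ZZ)^{-1}$, with a single minus sign (your displayed expression with $(-1)\cdot(-1)$ would sum to $-iI$), and the off-diagonal blocks already come out of \eqref{eq:Rinverse} in the stated form, so the push-through identity is not needed there.
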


\begin{proof}
The map $(L,W)\mapsto R_{L,W}HR_{L,W}^{-1}$ is holomorphic on $O$ by
\eqref{eq:Rinverse}. Since $Z\mapsto(Z,\widetilde{Z})$ is bounded real linear,
$Z\mapsto J(Z)$ is real analytic. Multiplication of the block matrices gives
\eqref{eq:Jblock}.

For $u\in H^{\frac{1}{2}}_{+}$,
\[
 R_Zu=u+Zu,
 \qquad
 J(Z)(u+Zu)=R_ZHu=-i(u+Zu),
\]
so the $-i$ eigenspace is the stated graph.

The map \(J\mapsto Q_J = \tfrac12(I+iJ)\) is complex affine, hence holomorphic. Moreover, the operator inversion map is holomorphic on the open set of invertible bounded operators, so \(P_+Q_J|_{H^{1/2}_+}^{-1}\) depends holomorphically on \(J\) whenever \(P_+Q_J|_{H^{1/2}_+}\) is invertible. As a product of holomorphic operator-valued maps, \(\mathcal{Z}(J) = \bigl(P_-Q_J|_{H^{1/2}_+}\bigr)\bigl(P_+Q_J|_{H^{1/2}_+}\bigr)^{-1}\) is therefore holomorphic.

A direct conjugation yields
\[
Q_{J(Z)}=\tfrac12\big(I+iJ(Z)\big) = R_Z\cdot \tfrac12(I+iH)\cdot R_Z^{-1} = R_Z P_+ R_Z^{-1}.
\]
Restricting this operator to \(H^{1/2}_+\), we compute its action on any \(u\in H^{1/2}_+\). Recall that
\[
R_Z=\begin{pmatrix}I & \widetilde Z\\ Z & I\end{pmatrix},\qquad
R_Z^{-1}=\begin{pmatrix}(I-\widetilde Z Z)^{-1} & -\widetilde Z(I-Z\widetilde Z)^{-1}\\ -Z(I-\widetilde Z Z)^{-1} & (I-Z\widetilde Z)^{-1}\end{pmatrix},
\]
we obtain
\[
Q_{J(Z)}u = R_Z P_+ R_Z^{-1}\begin{pmatrix}u\\0\end{pmatrix}
= R_Z\begin{pmatrix}(I-\widetilde Z Z)^{-1}u\\ 0\end{pmatrix}
= \begin{pmatrix}(I-\widetilde Z Z)^{-1}u\\ Z(I-\widetilde Z Z)^{-1}u\end{pmatrix}.
\]
This gives \begin{equation}\label{QJZ}
	Q_{J(Z)}\Big|_{H^{1/2}_{+}}
	=\begin{pmatrix}(I-\widetilde{Z}Z)^{-1}\\
		Z(I-\widetilde{Z}Z)^{-1}
	\end{pmatrix}.
\end{equation}
By definition of \(\mathcal{Z}(J)\),
\[
\mathcal{Z}(J(Z)) 
= \big(P_-Q_{J(Z)}|_{H_+}\big)\big(P_+Q_{J(Z)}|_{H_+}\big)^{-1}
= Z(I-\widetilde Z Z)^{-1}\cdot (I-\widetilde Z Z) = Z,
\]
which establishes \eqref{eq:inverse-graph} and completes the proof.

\end{proof}

\begin{proposition}\label{prop:JhJZ}
For every $[h]\in\T$,  we have $  J_h=J(Z_h)$ and
 $[h]\mapsto J_h$ is operator-norm real analytic.
\end{proposition}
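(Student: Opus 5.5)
The plan is to show that $J_h$ and $J(Z_h)$ coincide by comparing their eigenspaces. A bounded operator $J$ with $J^2=-I$ is determined by its $-i$ and $i$ eigenspaces, since $H^{\frac12}$ is their direct sum.

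First, the $-i$ eigenspace. By Lemma~\ref{lem:eigenspaces}, the $-i$ eigenspace of $J_h$ is $V_h(H^{\frac12}_+)$. By \eqref{eq:graph}, this equals $\{u+Z_hu:u\in H^{\frac12}_+\}$. Proposition~\ref{prop:graph-J} identifies the latter as the $-i$ eigenspace of $J(Z_h)$, which is defined because $Z_h\in\mathbb G$.

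Next, the $i$ eigenspace. By Lemma~\ref{lem:eigenspaces}, the $i$ eigenspace of $J_h$ is $V_h(H^{\frac12}_-)$. Since $h$ is real, $V_h$ commutes with complex conjugation $C$. Also $CH^{\frac12}_+=H^{\frac12}_-$. Hence
\[
V_h(H^{\frac12}_-)=C\,V_h(H^{\frac12}_+)=\{Cu+CZ_hu\}.
\]
Put $w=Cu\in H^{\frac12}_-$. Then $CZ_hu=CZ_hCw=\widetilde{Z_h}w$, so
\[
V_h(H^{\frac12}_-)=\{w+\widetilde{Z_h}w:w\in H^{\frac12}_-\}=R_{Z_h}(H^{\frac12}_-).
\]
On the other hand, for $w\in H^{\frac12}_-$ we have $J(Z_h)R_{Z_h}w=R_{Z_h}Hw=iR_{Z_h}w$. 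So $R_{Z_h}(H^{\frac12}_-)$ lies in the $i$ eigenspace of $J(Z_h)$.

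Now conclude equality. Both $J_h$ and $J(Z_h)$ act as $-i$ on $R_{Z_h}(H^{\frac12}_+)$ and as $i$ on $R_{Z_h}(H^{\frac12}_-)$. These two subspaces span $H^{\frac12}$ because $R_{Z_h}$ is invertible. Hence $J_h=J(Z_h)$. Equivalently, $V_h=R_{Z_h}D$ with $D$ block diagonal in the $H^{\frac12}_\pm$ decomposition, and such a $D$ commutes with $H$.

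Finally, analyticity. The map $[h]\mapsto Z_h$ is holomorphic into $\Bop(H^{\frac12}_+,H^{\frac12}_-)$ by Corollary~\ref{cor:Zholomorphic}, and it takes values in the open set $\mathbb G$. The map $Z\mapsto J(Z)$ is real analytic on $\mathbb G$ by Proposition~\ref{prop:graph-J}. A real analytic map composed with a holomorphic one is real analytic, so $[h]\mapsto J_h=J(Z_h)$ is operator-norm real analytic.

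The only delicate point is the identification of $V_h(H^{\frac12}_-)$ with the conjugate graph. This relies on $V_h$ commuting with $C$, which holds because $h$ maps $S^1$ to itself, together with the identity $\widetilde Z=CZC$. Everything else is bookkeeping with earlier results.
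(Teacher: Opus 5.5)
Your proposal is correct and follows essentially the same route as the paper. You identify the $-i$ and $i$ eigenspaces of $J_h$ with the graphs of $Z_h$ and $\widetilde{Z_h}$, the latter because $V_h$ commutes with conjugation, conclude $J_h=J(Z_h)$, and obtain real analyticity by composing Corollary~\ref{cor:Zholomorphic} with Proposition~\ref{prop:graph-J}. Your explicit remark that the two graphs span $H^{\frac12}$ because $R_{Z_h}$ is invertible (equivalently, the factorization $V_h=R_{Z_h}D$ with $D$ block diagonal) makes precise the step the paper states as ``same eigenspaces, hence equal.''
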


\begin{proof}
	Recall that the $\pm i$ eigenspaces of $J_h$ are given by
	\[
	E_{-i}(J_h)=V_h\bigl(H^{1/2}_+\bigr),\qquad
	E_{+i}(J_h)=V_h\bigl(H^{1/2}_-\bigr).
	\]
	From \eqref{eq:graph} we know $V_h(H^{1/2}_+)=\operatorname{Graph}(Z_h)$.
We compute
	\begin{align*}
		V_h(H_-^{1/2})
		&=V_h\big(\overline{H_+^{1/2}}\big)
		=\overline{V_h(H_+^{1/2})}
		=\overline{\operatorname{Graph}(Z_h)}\\
		&=\overline{\,\{u+Z_h u\;:\;u\in H_+\,\}}
		=\{\,\overline{u}+\overline{Z_h u}\;:\;u\in H_+\,\}\\
		&=\{\,v+\widetilde Z_h v\;:\;v\in H_-\,\}
		=\operatorname{Graph}(\widetilde Z_h).
	\end{align*}
Therefore
\[
E_{-i}(J_h)=\operatorname{Graph}(Z_h),\qquad
E_{+i}(J_h)=\operatorname{Graph}(\widetilde{Z_h}).
\]
Recall that $J(Z_h)=R_{Z_h}HR_{Z_h}^{-1}$ satisfies
\[
E_{-i}(J(Z_h))=\operatorname{Graph}(Z_h),\qquad
E_{+i}(J(Z_h))=\operatorname{Graph}(\widetilde Z_h).
\]
Thus $J_h$ and $J(Z_h)$ share the same $\pm i$ eigenspaces.
Hence, $J_h=J(Z_h)$.
The real analyticity of $[h]\mapsto J_h$ follows by composition
from Corollary~\ref{cor:Zholomorphic} and Proposition~\ref{prop:graph-J}.
\end{proof}

\subsection{Reconstruction from the complex-structure coordinate}
Recall that
\[
 \mathcal J=\{J_h:[h]\in\T\}\subseteq\Bop(H^{\frac{1}{2}}(S^{1})).
\]
Consider the open set
\begin{equation*}
 \mathcal U=\left\{J:
 P_+Q_J|_{H^{1/2}_{+}}\text{is \ invertible} \ \text{and} \
 E\bigl(F_-^{-1}\mathcal{Z}(J)F_+\bigr)\in\Omega=\beta(T(1))
\right\}.
\end{equation*}
Then $J_h=J(Z_h)$ belongs to $\mathcal U$. Indeed,
Propositions~\ref{prop:JhJZ} and~\eqref{QJZ} give
\[
 P_+Q_{J_h}|_{H^{1/2}_+}
 =\bigl(I-\widetilde{Z_h}Z_h\bigr)^{-1},
\]
which is invertible, while Propositions~\ref{prop:graph-grunsky}
and~\ref{prop:recovery} give
$E(F_-^{-1}Z(J_h)F_+)=\beta([h])\in\Omega$.

\begin{theorem}[Holomorphic reconstruction]\label{thm:local-reconstruction}
The formula
\begin{equation}\label{eq:Klocal}
 K(J)=\beta^{-1}\!\left(E\bigl(F_-^{-1}\mathcal{Z}(J)F_+\bigr)\right),
 \qquad J\in\mathcal U,
\end{equation}
defines a holomorphic map $K:\mathcal U\to\T$ satisfying
\begin{equation}\label{eq:Rperiod}
 K(J_h)=[h]\qquad([h]\in\T).
\end{equation}
Consequently, $\Phi([h])=J_h$ is a real-analytic diffeomorphism from $\T$
onto $\mathcal J$ in the operator-norm topology. Moreover,
\begin{equation}\label{eq:retraction}
 r=\Phi\circ K:\mathcal U\longrightarrow\mathcal J
\end{equation}
is an operator-norm real-analytic retraction of  $\mathcal U$ onto $\mathcal J$.
\end{theorem}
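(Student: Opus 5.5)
The proof is a composition argument: $K$ is built from maps already known to be holomorphic, and $K\circ\Phi=\id$ follows from the identities collected in this section.

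\textbf{Holomorphy of $K$.} On $\mathcal U$ the map $J\mapsto\mathcal Z(J)$ is holomorphic by Proposition~\ref{prop:graph-J}. Conjugation $Z\mapsto F_-^{-1}ZF_+$ by the fixed unitaries of \eqref{eq:Fourierunitaries} is bounded and complex linear, and $E$ is bounded complex linear by Lemma~\ref{lem:E}. So $J\mapsto E(F_-^{-1}\mathcal Z(J)F_+)$ is holomorphic from $\mathcal U$ into $B(\Ds)$. By definition of $\mathcal U$ it takes values in the open set $\Omega$, and there $\beta^{-1}$ is biholomorphic onto $\T$. Hence $K$ is holomorphic. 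The same composition shows $\mathcal U$ is open: it is the intersection of the open invertibility set with the preimage of $\Omega$ under a continuous map.

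\textbf{The identity $K(J_h)=[h]$.} By Proposition~\ref{prop:JhJZ}, $J_h=J(Z_h)$, and \eqref{eq:inverse-graph} then gives $\mathcal Z(J_h)=Z_h$. Proposition~\ref{prop:graph-grunsky} gives $F_-^{-1}Z_hF_+=G([h])$, and Proposition~\ref{prop:recovery} gives $E(G([h]))=\beta([h])$. Applying $\beta^{-1}$ yields \eqref{eq:Rperiod}, that is, $K\circ\Phi=\id_{\T}$.

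\textbf{Consequences for $\Phi$.} By Proposition~\ref{prop:JhJZ}, $\Phi$ is real analytic in operator norm. Since $K\circ\Phi=\id$, $\Phi$ is injective, and $\Phi^{-1}=K|_{\mathcal J}$ is the restriction of a holomorphic, hence continuous, map on the open set $\mathcal U\supset\mathcal J$. So $\Phi$ is a homeomorphism onto $\mathcal J$ with the operator-norm topology.

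The chain rule gives $dK_{\Phi(p)}\circ d\Phi_p=\id$ on $T_p\T$. Therefore $d\Phi_p$ is injective with closed range, and its range is complemented by $\ker dK_{\Phi(p)}$. So $\Phi$ is a split immersion and a topological embedding. In this sense $\mathcal J$ is a real-analytic submanifold and $\Phi$ is a real-analytic diffeomorphism onto it, with inverse the restriction of the real-analytic map $K$.

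\textbf{The retraction.} The map $r=\Phi\circ K:\mathcal U\to\mathcal J$ is real analytic as the composition of a holomorphic map with a real-analytic one. It fixes $\mathcal J$ pointwise because $r(J_h)=\Phi(K(J_h))=\Phi([h])=J_h$.

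\textbf{Main obstacle.} The only delicate point is making precise what ``diffeomorphism onto $\mathcal J$'' means, since $\mathcal J$ is not a priori a submanifold. I would resolve this through the retraction $r$. Near any point of $\mathcal J$, the map $\Phi$ composed with a chart of $\T$, together with the complementary splitting by $\ker dK$, gives a local real-analytic chart of $\mathcal U$ in which $\mathcal J$ is flat. Concretely, consider $(p,w)\mapsto\Phi(p)+w$ with $w\in\ker dK_{\Phi(p_0)}$ and apply the inverse function theorem. This shows $\mathcal J$ is a split real-analytic submanifold, and the remaining claims are then formal.
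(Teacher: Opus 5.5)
Your proposal is correct and follows the paper's proof essentially step for step. The shared structure is: holomorphy of $K$ by composition; the chain $\mathcal Z(J_h)=Z_h$, $F_-^{-1}Z_hF_+=G([h])$, $E(G([h]))=\beta([h])$, which gives $K\circ\Phi=\id$; the left inverse $dK$, which makes $\Phi$ a split immersion and a homeomorphism onto $\mathcal J$; and $r$ fixing $\mathcal J$ pointwise. Your flat-chart argument via $(p,w)\mapsto\Phi(p)+w$ with $w\in\ker dK_{\Phi(p_0)}$ only makes explicit what the paper compresses into ``split real-analytic immersion and hence a real-analytic embedding.''
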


\begin{proof}
The set $\mathcal U$ is open on $B(H^{1/2})$. Proposition~\ref{prop:graph-J} shows that
$J\mapsto \mathcal{Z}(J)$ is holomorphic there. The Fourier maps and $E$ are bounded
complex linear, and $\beta^{-1}$ is holomorphic on $\Omega$; hence $K$ is
holomorphic.

For a  point $J_h$, Proposition~\ref{prop:graph-J} gives
$\mathcal{Z}(J_h)=Z_h$, and Proposition~\ref{prop:graph-grunsky} gives
$F_-^{-1}Z_hF_+=G([h])$. Proposition~\ref{prop:recovery} then yields
\[
 E\bigl(F_-^{-1}\mathcal{Z}(J_h)F_+\bigr)=\beta([h]),
\]
so \eqref{eq:Rperiod} follows.

The forward map is real analytic by Proposition~\ref{prop:JhJZ}, and
\eqref{eq:Rperiod} says that $K\circ\Phi=\id_{\T}$. Hence $\Phi$ is a
homeomorphism onto $\mathcal J$ with real-analytic inverse. Differentiating
$K\circ\Phi=\id_{\T}$ shows that $d\Phi$ has the bounded left inverse $dK$;
thus $\Phi$ is a split real-analytic immersion and hence a real-analytic
embedding. Finally, $r$ is real analytic, takes values in $\mathcal J$, and
fixes every point of $\mathcal J$. Since $\mathcal J\subset\mathcal U$, it follows that
$r^2=r$ and $r(\mathcal U)=\mathcal J$.
\end{proof}

\begin{proof}[Proof of Corollary~\ref{cor:topology}]
The equivalence \eqref{eq:top-equivalence} follows from continuity of the
forward map and the local inverse in
Theorem~\ref{thm:local-reconstruction}. For
\eqref{eq:localcomparison}, use the Bers coordinate near $[h_0]$ and shrink
to convex balls on which the derivatives of the maps
\[
 \phi\longmapsto J_{\beta^{-1}(\phi)}
 \quad\text{and}\quad
 J\longmapsto E\bigl(F_-^{-1}\mathcal{Z}(J)F_+\bigr)
\]
are bounded. The Banach-space mean-value estimate then makes both maps
Lipschitz on those smaller balls. The second map agrees with
$\beta\circ\Phi^{-1}$ on $\mathcal J$, which gives the left-hand inequality;
the first gives the right-hand inequality.
\end{proof}

\begin{proof}[Proof of Corollary~\ref{cor:hushen}]
 Since $A_h$ is invertible and $Z_h=B_hA_h^{-1}$, compactness and the
Hilbert--Schmidt property are equivalent for $B_h$ and $Z_h$.

By \eqref{eq:Jblock} and Proposition~\ref{prop:JhJZ},
\[
J_h-H=
\begin{pmatrix}
	-2i\,\widetilde{Z}_h Z_h\,(I-\widetilde{Z}_h Z_h)^{-1}
	& 2i\,\widetilde{Z}_h\,(I-Z_h\widetilde{Z}_h)^{-1}\\[4pt]
	-2i\,Z_h\,(I-\widetilde{Z}_h Z_h)^{-1}
	& 2i\,Z_h\widetilde{Z}_h\,(I-Z_h\widetilde{Z}_h)^{-1}
\end{pmatrix}.
\]
Since $(I-\widetilde{Z}_h Z_h)^{-1}$ and $(I-Z_h\widetilde{Z}_h)^{-1}$ are bounded operators,
compactness  and  Hilbert--Schmidt property of $Z_h$ clearly implies the same property for every block, hence for $J_h-H$. 
Conversely, the same holds.
\end{proof}

\section{ The differential at the identity}\label{sec:infinitesimal}
Nag and Sullivan
developed the infinitesimal period map in Beltrami coordinates, while the
infinitesimal Grunsky map and its Hilbert--Schmidt relation to the Bers
differential were computed in  \cite{ShenGrunsky}. We express the same
differential in
boundary-vector-field coordinates, where it becomes a Calder\'on commutator
and a weighted Hankel operator.

Let $h_t$ be a smooth  path  in
$\operatorname{Diff}_+(S^1)$,  the group of orientation-preserving
$C^\infty$ diffeomorphisms of $S^1$, with $h_0=\id$. Choose angular
lifts $\varphi_t$ satisfying
\[
h_t(e^{i\theta})=e^{i\varphi_t(\theta)},\qquad
\varphi_t(\theta+2\pi)=\varphi_t(\theta)+2\pi,
\]
and suppose that
\begin{equation}\label{eq:path}
	\varphi_t(\theta)=\theta+tA(\theta)+o(t)
	\quad\text{in }C^{\infty}(S^{1}),
\end{equation}
where $A\in C^\infty_{\mathbb{R}}(S^1)$. 
Only the class of $A$ modulo
$\operatorname{span}_{\R}\{1,\cos\theta,\sin\theta\}$ represents a tangent
vector to $\T$; we retain all three M\"obius modes until the kernel is
identified below. 

A smooth path $h_t\in\operatorname{Diff}_+(S^1)$ with initial vector field $A$ descends via the quotient projection
to a smooth curve in the Fr\'echet manifold $M=\operatorname{Diff}_+(S^1)/\operatorname{M\"ob}$.
By \cite[Theorem~1.1]{NagVerjovsky}, the holomorphic immersion $M\hookrightarrow T(1)$
carries this curve to a smooth real‑parameter curve inside the Teichm\"uller Banach manifold $T(1)$.
Corollary~\ref{cor:Zholomorphic} and Proposition~\ref{prop:JhJZ} therefore guarantee that
the operator‑norm derivatives of $t\mapsto J_{h_t}$ and $t\mapsto Z_{h_t}$ exist.
We set
\[
\dot J_A=\left.\frac{dJ_{h_t}}{dt}\right|_{t=0}
\in\Bop(H^{\frac12}),
\qquad
\dot Z_A=\left.\frac{dZ_{h_t}}{dt}\right|_{t=0}
\in\Bop(H^{\frac12}_+,H^{\frac12}_-).
\]
These derivatives depend only on the initial vector field $A$, and not on the particular path $h_t$ realizing $A$,
which justifies the notation.
The operator $\dot J_A$ preserves $H^{\frac12}_{\mathbb R}$; its restriction to that
real subspace is the differential of the compatible‑complex‑structure map $T(1)\to\mathcal B(H^{\frac12})$.

\subsection{The complex-structure derivative}

For $f\in C^\infty(S^1)$, define
\[
 (M_A\partial_\theta)f=Af'.
\]
Thus $M_A\partial_\theta$ is a well-defined densely defined operator on
$H^{\frac12}$. For each fixed smooth representative $f$, write
\begin{equation}\label{eq:Vdot}
 \dot V_Af:=\left.\frac d{dt}\right|_{t=0}V_{h_t}f
 =A f'=M_A\partial_\theta f,
\end{equation}
where equality is understood modulo constants; the convergence holds in
$C^\infty$, hence in $H^{\frac12}$. This is a strong derivative on the fixed
vector $f$, not an operator-norm derivative of
$t\mapsto V_{h_t}$; see Theorem~\ref{thm:separation} or
\eqref{eq:inverse-minimum-bound}.
A direct computation gives that the inverse path has infinitesimal vector field $-A$, so
\[
 \left.\frac d{dt}\right|_{t=0}V_{h_t}^{-1}f=-\dot V_Af.
\]

\begin{theorem}[Calder\'on commutator form]\label{thm:commutator}
The operator-norm derivative $\dot J_A$ is the bounded extension to
$H^{\frac{1}{2}}(S^{1})$ of the operator initially defined on
$C^\infty(S^1)$ by
\begin{equation}\label{eq:commutator}
 \dot J_A=[M_A\partial_\theta,H].
\end{equation}
\end{theorem}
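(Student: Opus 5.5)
The plan is to compute the derivative of $J_{h_t}=V_{h_t}HV_{h_t}^{-1}$ on a fixed smooth vector and identify the result with the operator-norm derivative. The operator-norm derivative $\dot J_A$ exists by the discussion preceding the theorem, through Proposition~\ref{prop:JhJZ} and the Nag--Verjovsky immersion. Operator-norm convergence of difference quotients implies strong convergence. Hence, for every $f\in C^\infty(S^1)$, $\dot J_Af$ equals the $H^{\frac12}$-limit of $t^{-1}(J_{h_t}f-Hf)$. So it suffices to show that this pointwise limit is $[M_A\partial_\theta,H]f$ for smooth $f$. Since $C^\infty(S^1)$ is dense in $H^{\frac12}$ and $\dot J_A$ is bounded, the identity on $C^\infty$ then determines $\dot J_A$ as the stated bounded extension.

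For fixed smooth $f$, I would use the algebraic splitting
\[
 \frac{J_{h_t}f-Hf}{t}
 =\frac{V_{h_t}-I}{t}\,HV_{h_t}^{-1}f
 +H\,\frac{V_{h_t}^{-1}f-f}{t}.
\]
The second term converges to $-H(Af')=-HM_A\partial_\theta f$ in $H^{\frac12}$. This uses the inverse-path formula after \eqref{eq:Vdot} and the isometry of $H$ on $H^{\frac12}$.

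For the first term, set $g_t=HV_{h_t}^{-1}f$. Because $h_t^{-1}$ is a smooth path in $\operatorname{Diff}_+(S^1)$, $V_{h_t}^{-1}f\to f$ in $C^\infty$, and $H$ is continuous on $C^\infty(S^1)$ (it is a Fourier multiplier). Hence $g_t\to g_0=Hf$ in $C^\infty$. I would then write
\[
 \frac{(V_{h_t}-I)g_t}{t}(\theta)=\frac{g_t(\varphi_t(\theta))-g_t(\theta)}{t}
 =\int_0^1 g_t'\bigl(\theta+s(\varphi_t(\theta)-\theta)\bigr)\,ds\cdot\frac{\varphi_t(\theta)-\theta}{t}.
\]
Together with \eqref{eq:path} and the $C^\infty$ convergence $g_t\to Hf$, this gives convergence in $C^\infty$, hence in $H^{\frac12}$, to $A\,(Hf)'=M_A\partial_\theta Hf$. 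Adding the two limits gives
\[
 \dot J_Af=M_A\partial_\theta Hf-HM_A\partial_\theta f=[M_A\partial_\theta,H]f,
\]
with all identities taken modulo constants.

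The main obstacle is the uniformity in the first term. The difference quotient $(V_{h_t}-I)/t$ is \emph{not} norm-bounded on $H^{\frac12}$ (Theorem~\ref{thm:separation}), so one cannot pass the limit through it using only $H^{\frac12}$-convergence of $g_t$. This is why I would work in $C^\infty$ throughout, needing smooth dependence of $h_t^{-1}$ on $t$ and the mean-value representation above. A side remark is that boundedness of $[M_A\partial_\theta,H]$ on $H^{\frac12}$ need not be proved separately: it follows, because the commutator agrees on a dense subspace with the bounded operator $\dot J_A$. One can also see it directly, since the commutator is a smoothing operator when $A$ is smooth.
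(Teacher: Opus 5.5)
Your proposal is correct and follows the same route as the paper. Both arguments take the existence of the operator-norm derivative from Proposition~\ref{prop:JhJZ} and the Nag--Verjovsky discussion, compute the difference quotient of $J_{h_t}=V_{h_t}HV_{h_t}^{-1}$ on smooth $f$ using $C^\infty$ expansions of $V_{h_t}^{\pm1}$, identify the two limits by uniqueness in $H^{\frac12}$, and extend by density. Your splitting into $(V_{h_t}-I)HV_{h_t}^{-1}f/t$ and $H(V_{h_t}^{-1}f-f)/t$, together with the mean-value representation, only makes explicit the $C^\infty$-uniformity that the paper compresses into substituting Fr\'echet-topology $o(t)$ expansions.
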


\begin{proof}
	The operator‑norm derivative $\dot J_A = \bigl.\frac{d}{dt}\bigr|_{t=0}J_{h_t}$ exists
	by the preceding discussion.
	We now derive its explicit form on the dense subspace
	$C^\infty(S^1;\mathbb C)/\mathbb C$.
	
	For the smooth path $h_t\in\operatorname{Diff}_+(S^1)$ with $h_0=\mathrm{id}$
	and infinitesimal generator $A$, the pullback operator $V_{h_t}\colon f\mapsto f\circ h_t$
	has the Fréchet‑topology asymptotic expansion on $C^\infty(S^1)$:
	\[
	V_{h_t}=I+tM_A\partial_\theta+o(t),\qquad
	V_{h_t}^{-1}=I-tM_A\partial_\theta+o(t).
	\]
	Here $o(t)$ is understood in the Fréchet topology of $C^\infty(S^1)$,
	\textbf{not} in the operator‑norm topology of $\mathcal B(H^{\frac12})$.
	
	Substitute these expansions into $J_{h_t} = V_{h_t} H V_{h_t}^{-1}$ gives
	\begin{align*}
		J_{h_t} f
		&=\bigl(I+tM_A\partial_\theta+o(t)\bigr)H
		\bigl(I-tM_A\partial_\theta+o(t)\bigr)f\\
		&= Hf + t\bigl(M_A\partial_\theta H - H M_A\partial_\theta\bigr)f + o(t).
	\end{align*}

 For each $f\in C^\infty(S^1)$,
 $\frac{1}{t}(J_{h_t}-J_{h_0})$ converges in $H^{\frac12}$ to $\bigl[M_A\partial_\theta,H\bigr]f$.
On the other hand,  $\frac{1}{t}(J_{h_t}-J_{h_0})$ converges
in operator norm to $\dot J_A$, hence $\frac{1}{t}(J_{h_t}-J_{h_0})f\to \dot J_A f$.
Uniqueness of limits in $H^{\frac12}$ gives
\[
\dot J_A f = \bigl[M_A\partial_\theta,H\bigr]f
\quad \text{for all } f\in C^\infty(S^1).
\]
	The statement that $\dot J_A$ is the bounded extension of the commutator
	to $H^{\frac12}(S^1)$ now follows from density.  
\end{proof}

\subsection{The graph derivative}

\begin{theorem}\label{thm:graphderivative}
For $f\in H^{\frac{1}{2}}_{+}(S^{1})\cap C^{\infty}(S^{1})$, $ \dot Z_Af=P_-(Af').$
Consequently,
\[
 \dot Z_A=P_-M_A\partial_\theta|_{H^{1/2}_{+}(S^{1})}
\]
as a bounded operator from $H^{\frac{1}{2}}_{+}(S^{1})$ to
$H^{\frac{1}{2}}_{-}(S^{1})$.
\end{theorem}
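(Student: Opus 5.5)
We compute $\dot Z_A$ by differentiating the two factors of $Z_{h_t}=B_{h_t}A_{h_t}^{-1}$ along a fixed smooth vector. The justification for passing to the limit comes from the operator-norm differentiability already established. By Corollary~\ref{cor:Zholomorphic} and the discussion preceding Theorem~\ref{thm:commutator}, $t\mapsto Z_{h_t}$ is operator-norm differentiable at $t=0$, with $Z_{h_0}=Z_{\id}=0$. Hence for every $f\in H^{\frac12}_+$,
\[
\dot Z_Af=\lim_{t\to0}\tfrac1t Z_{h_t}f
\]
in $H^{\frac12}_-$. It therefore suffices to identify this limit for $f$ in the dense subspace $H^{\frac12}_+\cap C^\infty(S^1)$. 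The bounded-extension statement then follows by density, because $\dot Z_A$ is bounded.

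Fix such an $f$ and set $u_t=A_{h_t}^{-1}f\in H^{\frac12}_+$, so that $Z_{h_t}f=B_{h_t}u_t=P_-V_{h_t}u_t$. The difficulty is that $u_t$ moves with $t$, while we only control $V_{h_t}$ strongly on fixed vectors. I would split
\[
\tfrac1t Z_{h_t}f=\tfrac1t P_-V_{h_t}f+\tfrac1t P_-V_{h_t}(u_t-f).
\]
For the first term, $P_-f=0$, so it equals $P_-\tfrac1t(V_{h_t}f-f)$. By \eqref{eq:Vdot} this converges in $H^{\frac12}$ to $P_-(Af')$, since $P_-$ is bounded.

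The main obstacle is the second term, where one must show that $\tfrac1t P_-V_{h_t}(u_t-f)\to0$. Write $f-u_t=A_{h_t}^{-1}(A_{h_t}-I)f$ and observe that
\[
(A_{h_t}-I)f=P_+(V_{h_t}f-f)=O(t)
\]
in norm, again by \eqref{eq:Vdot}. The norms $\|A_{h_t}^{-1}\|$ stay bounded for small $t$. To see this, note that Proposition~\ref{prop:graph-grunsky} gives $u_n=A_h^{-1}e_n^+$. Alternatively, use the identity $A_h^{-1}=(I-\widetilde{Z_h}Z_h)^{-1}$-type bounds that follow from the symplectic relation $A_h^*A_h-B_h^*B_h=I$, which is a consequence of $V_h$ preserving $\omega$. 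Combined with $\|Z_{h_t}\|\to0$, this gives $\|A_{h_t}^{-1}\|\le(1-\|Z_{h_t}\|^2)^{-1/2}\to1$. Hence $\|u_t-f\|=O(t)$, and moreover
\[
\tfrac1t(u_t-f)\to -P_+(Af')=:g.
\]
This uses $A_{h_t}^{-1}\to I$ strongly, which follows from the same bounds together with strong continuity, Theorem~\ref{thm:strong-cont}. Then
\[
\tfrac1t P_-V_{h_t}(u_t-f)=P_-V_{h_t}\bigl(\tfrac1t(u_t-f)-g\bigr)+P_-V_{h_t}g.
\]
The first piece tends to $0$ because $\|V_{h_t}\|\le e^{\tau(0,[h_t])}$ is bounded by \eqref{eq:Vnorm}. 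The second piece tends to $P_-g=0$ by strong continuity, since $g\in H^{\frac12}_+$. Uniform boundedness plus strong convergence is exactly what replaces the unavailable norm continuity of $V_{h_t}$ from Theorem~\ref{thm:separation}.

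Combining the two terms gives $\dot Z_Af=P_-(Af')$ on $H^{\frac12}_+\cap C^\infty$. The identity $\dot Z_A=P_-M_A\partial_\theta|_{H^{1/2}_+}$ as a bounded operator then follows by density. As a consistency check, the result is the lower-left block of $\tfrac{i}{2}\dot J_A$, computed from \eqref{eq:Jblock} at $Z=0$ as $-2i\dot Z_A$, compared with Theorem~\ref{thm:commutator}. There $[M_A\partial_\theta,H]$ on $H^{\frac12}_+$ has $P_-$-component $P_-(Af')(-i)-iP_-(Af')=-2iP_-(Af')$, which matches.
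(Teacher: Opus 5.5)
Your argument is correct, but it takes a longer route than the paper's.

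\textbf{Your route.} You set $u_t=A_{h_t}^{-1}f$. Then $Z_{h_t}f=P_-V_{h_t}u_t$ puts the moving vector inside $V_{h_t}$, which is exactly the operator that fails to be norm-continuous. Controlling it costs you several ingredients:
\begin{itemize}
\item the symplectic identity $A_h^*A_h-B_h^*B_h=I$;
\item a uniform bound on $A_{h_t}^{-1}$;
\item strong convergence $A_{h_t}^{-1}\to I$;
\item the first-order expansion of $u_t$;
\item the uniform bound \eqref{eq:Vnorm} on $V_{h_t}$.
\end{itemize}

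\textbf{The paper's route.} The paper takes $u_t=A_{h_t}f$, the forward operator rather than its inverse. Then $Z_{h_t}u_t=B_{h_t}f=P_-V_{h_t}f$ involves $V_{h_t}$ only on the fixed smooth vector $f$. The moving vector now sits inside the norm-differentiable $Z_{h_t}$, and the split $\tfrac1tZ_{h_t}u_t=(\tfrac1tZ_{h_t}-\dot Z_A)u_t+\dot Z_A(u_t-f)+\dot Z_Af$ needs nothing beyond $u_t=P_+V_{h_t}f\to f$. What your route buys in exchange is the expansion $A_{h_t}^{-1}f=f-tP_+(Af')+o(t)$ and an explicit norm bound for $A_h^{-1}$.

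\textbf{On that bound.} The identity gives $\|A_hu\|^2=\|u\|^2+\|B_hu\|^2$, hence directly $\|A_h^{-1}\|\le 1$. The quantity $(1-\|Z_h\|^2)^{-1/2}$ is the bound it yields for $\|A_h\|$; your inequality for $\|A_h^{-1}\|$ is still true, just not the natural one. Your remark that $u_n=A_h^{-1}e_n^+$ proves no bound by itself.

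\textbf{Strong convergence.} You only need it on the smooth vectors $P_+(Af')$ and $g$, where it follows from $C^\infty$ convergence of $w\circ h_t$. So you do not need Theorem~\ref{thm:strong-cont}, which is stated for normalized maps, a hypothesis your path need not satisfy.

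\textbf{Your consistency check.} This is itself a complete proof. Since $J_{h_t}=J(Z_{h_t})$, and by \eqref{eq:Jblock} the lower-left block of the differential of $J$ at $Z=0$ is $-2i\dot Z$, Theorem~\ref{thm:commutator} yields $\dot Z_Af=P_-(Af')$ at once. The paper runs this same computation in the opposite direction in Proposition~\ref{prop:JdotZdot}.
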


\begin{proof}
Fix a smooth $f\in H^{\frac{1}{2}}_{+}(S^{1})$. Put
\[
 u_t=A_{h_t}f=P_+V_{h_t}f.
\]
Then $u_t\to f$ in $H^{\frac{1}{2}}_{+}(S^{1})$ and, by the definition of $Z_t$,
\[
 Z_tu_t=B_{h_t}f=P_-V_{h_t}f.
\]
Since $Z_0=0$ and $Z_t/t\to\dot Z_A$ in operator norm, we  have
\[\frac1t Z_t u_t
 =\left(\frac{Z_t}{t}-\dot Z_A\right)u_t+\dot Z_A\,(u_t-f)+\dot Z_A f\longrightarrow \dot Z_A f.
\]
On the other hand, $P_-f=0$, and \eqref{eq:Vdot} gives
\[\frac1t Z_t u_t=\frac1tP_-V_{h_t}f=
 \frac1tP_-\bigl(V_{h_t}f-f\bigr)
 \longrightarrow P_-(Af').
\]
This proves $ \dot Z_Af=P_-(Af')$ on a dense subspace. The bounded
extension is the operator-norm derivative supplied by the preceding path
argument.
\end{proof}

\begin{corollary}\label{cor:Ztilderformula}
Recall that $\widetilde{\dot Z}_A = C\dot Z_A C$, where $C$ denotes complex conjugation.
	Then
	\[
	\widetilde{\dot Z}_A = P_+ M_A\partial_\theta\big|_{H^{1/2}_{-}(S^1)}
	\]
	as a bounded operator from $H^{\frac12}_{-}(S^1)$ to $H^{\frac12}_{+}(S^1)$.
\end{corollary}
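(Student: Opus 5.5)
The plan is to conjugate the identity of Theorem~\ref{thm:graphderivative} by $C$, using that $A$ is real valued. Three elementary facts carry the argument:
\begin{itemize}
\item $C$ interchanges $H^{\frac12}_+$ and $H^{\frac12}_-$.
\item $CP_-C=P_+$. Indeed $P_\pm=\tfrac12(I\pm iH)$, and $H$ commutes with $C$: the multiplier $-i\operatorname{sgn}(n)$ maps real functions to real functions, as the conjugation $\overline{\widehat{u}(-n)}$ check shows. Hence $C(iH)C=-iH$.
\item $C M_A\partial_\theta C=M_A\partial_\theta$, because $\overline{A\bar g'}=Ag'$ for real $A$.
\end{itemize}

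First I check the formula on the dense subspace $H^{\frac12}_-\cap C^\infty(S^1)$. For such $g$, the function $f=Cg=\bar g$ is smooth and lies in $H^{\frac12}_+$. By Theorem~\ref{thm:graphderivative}, $\dot Z_A\bar g=P_-(A\bar g')$. Applying $C$ gives
\[
C\dot Z_A Cg=\overline{P_-(A\bar g')}=(CP_-C)\bigl(\overline{A\bar g'}\bigr)=P_+(Ag').
\]
Thus $\widetilde{\dot Z}_A g=P_+M_A\partial_\theta g$ for all such $g$.

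Next I extend from the dense subspace. The operator $\widetilde{\dot Z}_A=C\dot Z_AC$ is bounded from $H^{\frac12}_-$ to $H^{\frac12}_+$, since $C$ is an isometry and $\dot Z_A$ is bounded. So $P_+M_A\partial_\theta|_{H^{1/2}_-}$ extends to a bounded operator, and its bounded extension coincides with $\widetilde{\dot Z}_A$ by density of smooth functions in $H^{\frac12}_-$. This is the sense in which the corollary's identity holds, exactly paralleling Theorem~\ref{thm:graphderivative}.

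There is no serious obstacle here. The only points needing care are that $A$ is real valued, so that $M_A\partial_\theta$ commutes with $C$, and that $C$ intertwines $P_-$ with $P_+$, which follows from the reality of $H$. One could alternatively note that $t\mapsto\widetilde{Z_{h_t}}$ is the conjugate of a differentiable path, so its derivative is $C\dot Z_AC$. However, the definition already gives $\widetilde{\dot Z}_A=C\dot Z_AC$ directly, so this is not needed.
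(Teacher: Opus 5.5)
Your proposal is correct and follows essentially the paper's argument: you conjugate the formula of Theorem~\ref{thm:graphderivative} by $C$ on smooth $g\in H^{\frac12}_-$, using $CP_-C=P_+$ and the fact that $C$ commutes with $M_A\partial_\theta$ because $A$ is real. Your proof also makes explicit two points the paper leaves implicit, namely the reality of $H$ behind $CP_-C=P_+$ and the density argument identifying the bounded extension with $C\dot Z_AC$.
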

\begin{proof}
	This follows directly from Theorem~\ref{thm:graphderivative}. For $g\in H^{\frac{1}{2}}_{-}(S^{1})\cap C^{\infty}(S^{1}),$
	\begin{align*}
		\widetilde{\dot Z}_A g
		&= C P_- M_A \partial_\theta C g 
		= P_+ C M_A C \partial_\theta g 
		 \\
		&= P_+ M_A \, C C \, \partial_\theta g={P_+ M_A \partial_\theta\, g}.
	\end{align*}
\end{proof}

The derivatives $\dot J_A$ and $\dot Z_A$ represent the same tangent vector
on $ T(1)$ in different coordinate systems. The precise relation
is given in the following proposition.

\begin{proposition}\label{prop:JdotZdot}
	Relative to the orthogonal decomposition
	$H^{\frac12}(S^1)=H^{\frac12}_+(S^1)\oplus H^{\frac12}_-(S^1)$,
	the operator $\dot J_A$ has the block‑matrix representation
	\begin{equation}
		\dot J_A
		=
		\begin{pmatrix}
			0 & 2i\,\widetilde{\dot Z}_A\\
			-2i\,\dot Z_A & 0
		\end{pmatrix}.
	\end{equation}
\end{proposition}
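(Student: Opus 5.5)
Two routes are available. The first is to differentiate the block formula \eqref{eq:Jblock} along the path; the second is to compute the blocks of the commutator from Theorem~\ref{thm:commutator} directly. I will use the first as the main argument and the second as a consistency check.

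By Proposition~\ref{prop:JhJZ}, $J_{h_t}=J(Z_{h_t})$, where $Z_t:=Z_{h_t}$ satisfies $Z_0=0$ because $h_0=\id$. The map $Z\mapsto J(Z)$ is real analytic on $\mathbb G$ by Proposition~\ref{prop:graph-J}, and $t\mapsto Z_t$ is operator-norm differentiable with derivative $\dot Z_A$. The chain rule therefore gives $\dot J_A=dJ_0(\dot Z_A)$.

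Since $C$ is isometric and real linear, $\widetilde{Z_t}=CZ_tC$ is differentiable with derivative $\widetilde{\dot Z}_A$. The remaining task is to expand each block of \eqref{eq:Jblock} to first order at $Z=0$:
\begin{itemize}
\item In the diagonal blocks, $(I+\widetilde Z Z)(I-\widetilde Z Z)^{-1}=I+2\widetilde Z Z+O(\|Z\|^2)$, and similarly for $Z\widetilde Z$. The diagonal blocks are quadratic in $Z$, so their derivatives at $0$ vanish.
\item In the off-diagonal blocks, $2i\widetilde Z(I-Z\widetilde Z)^{-1}=2i\widetilde Z+O(\|Z\|^3)$ and $-2iZ(I-\widetilde Z Z)^{-1}=-2iZ+O(\|Z\|^3)$. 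Their derivatives are $2i\widetilde{\dot Z}_A$ and $-2i\dot Z_A$.
\end{itemize}
This yields the claimed block matrix.

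As a check, I would compute the blocks of $[M_A\partial_\theta,H]$ on smooth $f$. For $f\in H^{1/2}_+$ we have $Hf=-if$. Then
\[P_-[M_A\partial_\theta,H]f=P_-\bigl(-iAf'-H(Af')\bigr)=-iP_-(Af')-iP_-(Af')=-2iP_-(Af'),\]
because $H=iI$ on $H^{1/2}_-$. By Theorem~\ref{thm:graphderivative}, this equals $-2i\dot Z_A f$. Likewise, $P_+[M_A\partial_\theta,H]f=-iP_+(Af')+iP_+(Af')=0$. The $H^{1/2}_-$ column works the same way, using Corollary~\ref{cor:Ztilderformula} to get $2i\widetilde{\dot Z}_A$ and a zero diagonal entry. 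Density of smooth functions together with boundedness then finishes the identification.

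The main obstacle is justifying that $\widetilde{Z_t}$ has the conjugated derivative and that the $O(\|Z\|^2)$ remainders are uniform in operator norm. Both are routine because $C$ is an isometry and Neumann-series bounds hold for $\|Z\|<1$.
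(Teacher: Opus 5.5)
Your proof is correct, but your main argument is not the paper's. The paper's proof is essentially your ``consistency check.'' It evaluates the commutator from Theorem~\ref{thm:commutator} on smooth $f\in H^{\frac12}_+$ and $g\in H^{\frac12}_-$, and uses $P_-=\tfrac12(I-iH)$ to obtain $-2iP_-M_A\partial_\theta f$ and $2iP_+M_A\partial_\theta g$. It identifies these with $-2i\dot Z_Af$ and $2i\widetilde{\dot Z}_Ag$ via Theorem~\ref{thm:graphderivative} and Corollary~\ref{cor:Ztilderformula}, and then extends by density.

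Your main route instead applies the chain rule to $J_{h_t}=J(Z_{h_t})$ (Proposition~\ref{prop:JhJZ}) and linearizes \eqref{eq:Jblock} at $Z=0$, and this works. One small correction: $(I+\widetilde ZZ)(I-\widetilde ZZ)^{-1}=I+2\widetilde ZZ+O(\|Z\|^4)$. All you actually need is that the diagonal blocks differ from $\mp iI$ by $O(\|Z\|^2)$.

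Your route is purely structural. It shows that $dJ_0(\dot Z)=\begin{pmatrix}0&2i\widetilde{\dot Z}\\-2i\dot Z&0\end{pmatrix}$ for every tangent vector $\dot Z$. It therefore uses neither the Calder\'on commutator formula nor the explicit formula $\dot Z_A=P_-M_A\partial_\theta$. It is also an identity of bounded operators from the outset, so no density step is needed.

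The paper's route bypasses the linearization of \eqref{eq:Jblock}. Instead it exhibits the off-diagonal blocks of the commutator concretely as $\mp2iP_\mp M_A\partial_\theta$, which amounts to checking that Theorems~\ref{thm:commutator} and~\ref{thm:graphderivative} are mutually consistent. Conversely, your argument combined with Theorem~\ref{thm:graphderivative} and Corollary~\ref{cor:Ztilderformula} would give an independent proof of Theorem~\ref{thm:commutator}.
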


\begin{proof}
For $f\in H^{\frac{1}{2}}_{+}(S^{1})\cap C^\infty(S^1)$, $Hf=-if$. Write
\[
 M_A\partial_\theta f=P_+M_A\partial_\theta f+P_-M_A\partial_\theta f.
\]
Then by $ P_-=\tfrac12(I-iH)$, 
\begin{align*}
 [M_A\partial_\theta,H]f
 &=-iM_A\partial_\theta f-H(M_A\partial_\theta f)\\
 &=-2iP_-M_A\partial_\theta f\\
 &=-2i\dot Z_Af.
\end{align*}
Similarly, for $g\in H^{\frac12}_{-}(S^1)\cap C^\infty(S^1)$, using $Hg=ig$
and Corollary~\ref{cor:Ztilderformula}, we obtain
\[
[M_A\partial_\theta,H]g = 2iP_+ M_A \partial_\theta\, g=2i\,\widetilde{\dot Z}_A g.
\]
Since smooth vectors are dense in $H^{\frac12}_+$ and $H^{\frac12}_-$,
the block‑matrix representation extends by continuity to all of $H^{\frac12}(S^1)$.
\end{proof}

\subsection{The weighted Hankel matrix}

Recall that
\[
 e_m^+(\theta)=\frac{e^{im\theta}}{\sqrt m},\quad m\geq1,
 \qquad
 e_\ell^-(\theta)=\frac{e^{-i\ell\theta}}{\sqrt\ell},\quad \ell\geq1,
\]
are orthonormal bases of $H^{\frac{1}{2}}_{+}(S^{1})$ and
$H^{\frac{1}{2}}_{-}(S^{1})$, respectively.

\begin{proposition}[Fourier matrix of the tangent map]\label{prop:Hankel}
For $m,\ell\geq1$,
\begin{equation}\label{eq:Hankelmatrix}
(\dot Z_A)_{\ell,m}\triangleq 
 \langle\dot Z_Ae_m^+,e_\ell^-\rangle
 =i\sqrt{m\ell}\,\wh A(-(m+\ell)).
\end{equation}
Equivalently,
\[
 \dot Z_Ae_m^+
 =i\sum_{\ell\geq1}\sqrt{m\ell}\,
 \wh A(-(m+\ell))e_\ell^-.
\]
\end{proposition}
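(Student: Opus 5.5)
The plan is to compute the matrix entries directly from the formula $\dot Z_Af=P_-(Af')$ of Theorem~\ref{thm:graphderivative}. That formula holds for smooth $f\in H^{\frac12}_+(S^1)$, and each basis vector $e_m^+=e^{im\theta}/\sqrt m$ is such an $f$. No limiting argument is needed at this point: Theorem~\ref{thm:graphderivative} already identifies the bounded operator $\dot Z_A$ with $P_-M_A\partial_\theta$ on smooth vectors.

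First, differentiate: $(e_m^+)'=im\,e^{im\theta}/\sqrt m=i\sqrt m\,e^{im\theta}$, so
\[
 \dot Z_Ae_m^+=i\sqrt m\,P_-\bigl(Ae^{im\theta}\bigr).
\]
Next, expand the smooth function $Ae^{im\theta}$ in Fourier series. Its $n$-th coefficient is $\wh A(n-m)$. Since $A$ is $C^\infty$, the coefficients $\wh A(k)$ decay rapidly, so the series converges in $H^{\frac12}$.

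The projection $P_-=\frac12(I-iH)$ keeps exactly the modes $n=-\ell$ with $\ell\ge1$, and the coefficient of such a mode is $\wh A(-\ell-m)$. Rewriting $e^{-i\ell\theta}=\sqrt\ell\,e_\ell^-$ gives
\[
 \dot Z_Ae_m^+=i\sum_{\ell\ge1}\sqrt{m\ell}\,\wh A(-(m+\ell))\,e_\ell^-,
\]
which is the second displayed form of the statement.

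Finally, the matrix entry follows by pairing with $e_\ell^-$. Here one uses that $\{e_\ell^-\}$ is orthonormal for the inner product $\langle u,v\rangle=\sum_{n\ne0}|n|\wh u(n)\overline{\wh v(n)}$, which is the polarization of \eqref{eq:Hhalf-intro}. This yields $\langle\dot Z_Ae_m^+,e_\ell^-\rangle=i\sqrt{m\ell}\,\wh A(-(m+\ell))$, i.e.\ \eqref{eq:Hankelmatrix}.

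There is no real obstacle; the only points needing care are bookkeeping. One must fix the index convention (the coefficient of $e^{-i\ell\theta}$ in $Ae^{im\theta}$ is $\wh A(-\ell-m)$), and track the $\sqrt m$ and $\sqrt\ell$ normalizations. It is also worth remarking that the entry depends only on $m+\ell$, which is the Hankel structure. Moreover, $\wh A(-k)$ with $k\ge2$ never involves the M\"obius modes $k=0,\pm1$, consistent with the kernel discussion that follows.
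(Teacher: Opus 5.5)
Your proposal is correct and follows the paper's proof: apply Theorem~\ref{thm:graphderivative} to $e_m^+$, expand $Ae^{im\theta}$ in its Fourier series, keep the modes $e^{-i\ell\theta}$ with $P_-$, and rewrite $e^{-i\ell\theta}=\sqrt{\ell}\,e_\ell^-$. Your remark that $\{e_\ell^-\}$ is orthonormal, which lets you read off the matrix entry, is a small step the paper leaves implicit.
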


\begin{proof}
By Theorem \ref{thm:graphderivative},
\begin{align*}
	\dot Z_A e_m^+
	&= P_-\!\left(A(\theta)\frac{d}{d\theta}\frac{e^{im\theta}}{\sqrt{m}}\right)\\
	&= i\sqrt{m}\,P_-\Bigl(A(\theta)e^{im\theta}\Bigr)\\
	&= i\sqrt{m}\,P_-\sum_{n\in\mathbb Z}\widehat A(n)e^{i(n+m)\theta}\\
	&= i\sqrt{m}\sum_{\ell\ge 1}\widehat A(-(m+\ell))\,e^{-i\ell\theta}\\
	&= i\sum_{\ell\ge1}\sqrt{m\ell}\,\widehat A(-(m+\ell))\,e_\ell^-.
\end{align*}
\end{proof}

The dependence on $m+\ell$ makes this a Hankel matrix. The factor
$\sqrt{m\ell}$ comes from the orthonormal normalization in the critical
Sobolev space. 

The matrix formula has two useful consequences.

\begin{corollary}[The M\"obius kernel]\label{cor:Mobiuskernel}
Let $A\in C^{\infty}_{\R}(S^{1})$. Then
\[
 \dot Z_A=0
 \quad\Longleftrightarrow\quad
 A\in\operatorname{span}_{\R}\{1,\cos\theta,\sin\theta\}.
\]
Thus the kernel of the differential before normalization is exactly infinitesimal M\"obius transformations.
\end{corollary}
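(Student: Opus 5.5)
The argument will read everything off the Hankel formula in Proposition~\ref{prop:Hankel}. By that proposition, $\dot Z_A=0$ holds if and only if every matrix entry $i\sqrt{m\ell}\,\wh A(-(m+\ell))$ vanishes for $m,\ell\geq1$. Since $\sqrt{m\ell}\neq0$, this is equivalent to $\wh A(-n)=0$ for every $n$ that can be written as $m+\ell$ with $m,\ell\geq1$, that is, for all $n\geq2$.

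\emph{Forward direction.} Because $A$ is real valued, $\wh A(n)=\overline{\wh A(-n)}$, so $\wh A(n)=0$ for all $|n|\geq2$. Since $A$ is smooth, its Fourier series converges to $A$. Hence $A(\theta)=\wh A(0)+\wh A(1)e^{i\theta}+\wh A(-1)e^{-i\theta}$. Reality of $A$ forces $\wh A(0)\in\R$ and $\wh A(-1)=\overline{\wh A(1)}$, and therefore $A\in\operatorname{span}_{\R}\{1,\cos\theta,\sin\theta\}$.

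\emph{Converse.} If $A$ lies in this span, then $\wh A(-n)=0$ for $n\geq2$. Every entry of the matrix of $\dot Z_A$ in the orthonormal bases $\{e_m^+\}$ and $\{e_\ell^-\}$ then vanishes. A bounded operator whose matrix in orthonormal bases is zero is the zero operator, so $\dot Z_A=0$. Alternatively, one can check directly that $P_-(Af')=0$ for trigonometric polynomials $f\in H^{\frac12}_+$, and then conclude by density and Theorem~\ref{thm:graphderivative}.

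For the interpretive remark, these three real vector fields are exactly the generators of the rotations and of the one-parameter hyperbolic M\"obius flows of $S^1$. This is consistent with $Z_{m\circ h}=Z_h$, which implies $Z_{h_t}\equiv0$ along any path in $\Mob$. I do not expect a real obstacle. The only points needing care are the reality condition used to pass from negative to positive frequencies, and the fact that the matrix entries determine the bounded operator $\dot Z_A$. The latter holds because Proposition~\ref{prop:Hankel} computes $\dot Z_A$ on an orthonormal basis of $H^{\frac12}_+$.
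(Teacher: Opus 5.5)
Your proposal is correct and follows the paper's proof essentially verbatim. Both read off $\wh A(-n)=0$ for all $n\geq2$ from the Hankel formula of Proposition~\ref{prop:Hankel}, use $\wh A(n)=\overline{\wh A(-n)}$ for the forward direction, and use $m+\ell\geq2$ for the converse; your extra remarks ($\sqrt{m\ell}\neq0$, and that vanishing on an orthonormal basis forces the bounded operator to vanish) only spell out steps the paper leaves implicit.
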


\begin{proof}
	If $\dot Z_A=0$, \eqref{eq:Hankelmatrix} yields $\wh A(-n)=0$ for all $n\ge2$.
	Since $A$ is real‑valued, $\wh A(n)=\overline{\wh A(-n)}$, so $\wh A(n)=0$ whenever $|n|\ge2$,
	thus $A\in\operatorname{span}_{\mathbb R}\{1,\cos\theta,\sin\theta\}$.
	
	Conversely, suppose $A$ has Fourier support only for $|n|\le 1$.
	Since $m+\ell\ge2$ for all $m,\ell\ge1$, we have $\widehat A(-(m+\ell))=0$.
	By \eqref{eq:Hankelmatrix}, $\dot Z_A=0$.
	These vector fields are exactly the infinitesimal M\"obius transformations on $S^1$.
\end{proof}

For an operator $T:H^{\frac{1}{2}}_{+}(S^{1})\to H^{\frac{1}{2}}_{-}(S^{1})$,
we define $T$ to be Hilbert‑Schmidt whenever the series
\[
\|T\|_{\HS}^2\triangleq \sum_{m=1}^{\infty}\|Te_m^+\|_{H^{\frac{1}{2}}_{-}}^2<\infty.
\]
Recall that $H^{3/2}(S^1)$ is defined as the space of functions $f$ on $S^1$ with Fourier coefficients $\widehat{f}(n)$ satisfying
\[
\sum_{n\in\mathbb Z}|n|^3\bigl|\widehat{f}(n)\bigr|^2<\infty.
\]
  It is a well‑known fact that the tangent space of the Weil–Petersson Teichmüller space at the base point can be identified with
 $H^{3/2}(S^1)$ modulo infinitesimal M\"obius transformations,
 see~\cite{NagVerjovsky,ShenTangWP,TakhtajanTeo}.

\begin{corollary}[Weil--Petersson form]
\label{cor:WPform}
For $A\in C^{\infty}_{\R}(S^{1})$,
\begin{equation}\label{eq:WPform}
 \|\dot Z_A\|_{\HS}^2
 =\frac16\sum_{n=2}^{\infty}(n^3-n)|\wh A(-n)|^2.
\end{equation}
Consequently, after quotienting out the M\"obius modes, the differential
extends by completion from smooth vector fields to the Weil--Petersson
tangent space $H^{3/2}(S^{1})$ with values in the Hilbert--Schmidt
operators. The right-hand side of \eqref{eq:WPform} is, up to a constant factor, the Weil--Petersson quadratic form at the base point.
Hence the differential of the map $Z_{h_t}$ identifies the completed
Weil--Petersson tangent space with its image in
$\mathrm{HS}(H^{\frac12}_+,H^{\frac12}_-)$.
\end{corollary}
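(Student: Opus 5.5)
Compute the Hilbert--Schmidt norm directly from the Hankel matrix of Proposition~\ref{prop:Hankel}, then regroup by antidiagonals. Since $\{e_\ell^-\}$ is an orthonormal basis of $H^{\frac12}_-$, Parseval gives
\[
 \|\dot Z_A\|_{\HS}^2=\sum_{m\geq1}\|\dot Z_Ae_m^+\|^2
 =\sum_{m,\ell\geq1}m\ell\,\bigl|\wh A(-(m+\ell))\bigr|^2 .
\]
All terms are nonnegative, so we may reorder freely. Grouping by $n=m+\ell\geq2$ gives
\[
 \|\dot Z_A\|_{\HS}^2=\sum_{n\geq2}|\wh A(-n)|^2\sum_{m=1}^{n-1}m(n-m).
\]

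The inner sum is elementary:
\[
 \sum_{m=1}^{n-1}m(n-m)=n\cdot\frac{(n-1)n}{2}-\frac{(n-1)n(2n-1)}{6}=\frac{n(n-1)(n+1)}{6}=\frac{n^3-n}{6}.
\]
This proves \eqref{eq:WPform}. Because $A$ is smooth, $\wh A$ decays rapidly, so the series converges and $\dot Z_A$ is indeed Hilbert--Schmidt.

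For the completion statement, set $q(A)=\frac16\sum_{n\ge2}(n^3-n)|\wh A(-n)|^2$.
\begin{itemize}
\item Since $A$ is real, $|\wh A(-n)|=|\wh A(n)|$. Hence $q(A)$ is, up to the constant $1/12$, equal to $\sum_{n\in\mathbb Z}(|n|^3-|n|)|\wh A(n)|^2$.
\item The weight $|n|^3-|n|$ vanishes exactly for $|n|\le1$, and for $|n|\ge2$ it is comparable to $|n|^3$. So $q$ is a squared Hilbert norm on $H^{3/2}_{\R}(S^1)/\operatorname{span}_\R\{1,\cos\theta,\sin\theta\}$, equivalent to the quotient $H^{3/2}$ norm.
\item This is consistent with Corollary~\ref{cor:Mobiuskernel}.
\item The map $A\mapsto\dot Z_A$ is real linear. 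This is clear from the matrix formula, or from Theorem~\ref{thm:graphderivative}.
\end{itemize}
By \eqref{eq:WPform}, this linear map is an isometry, up to a constant, from smooth fields with the $q$-norm into $\HS(H^{\frac12}_+,H^{\frac12}_-)$. It therefore extends uniquely to an isometric embedding of the completion, namely $H^{3/2}$ modulo M\"obius modes. The image is closed because the map is isometric. The extended operator is still given by the Hankel matrix $i\sqrt{m\ell}\,\wh A(-(m+\ell))$, since matrix entries are continuous in the $\HS$ norm.

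Finally, the identification of $q$ with the Weil--Petersson form at the base point is the standard formula $\sum_{n\geq2}(n^3-n)|\wh A(n)|^2$, up to normalization, from \cite{NagVerjovsky,TakhtajanTeo}, which we quote. The only point requiring care is this matching of normalizing constants, and the argument does not depend on it. There is no real obstacle beyond the bookkeeping of the antidiagonal sum.
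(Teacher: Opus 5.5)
Your proposal is correct and follows the paper's proof. You apply Parseval to the Hankel matrix of Proposition~\ref{prop:Hankel}, regroup along the antidiagonals $m+\ell=n$, evaluate $\sum_{m=1}^{n-1}m(n-m)=(n^3-n)/6$, and extend by density using $n^3-n\asymp n^3$ for $n\ge 2$. Your version only adds details the paper leaves implicit: the explicit inner-sum computation, the constant $1/12$ from real symmetrization, and the isometric-extension and closed-image argument.
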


\begin{proof}
Using the orthonormal bases above and \eqref{eq:Hankelmatrix},
\begin{align*}
 \|\dot Z_A\|_{\HS}^2
 &=\sum_{m,\ell\geq1}m\ell\,
   |\wh A(-(m+\ell))|^2\\
 &=\sum_{n=2}^{\infty}
   \left(\sum_{m=1}^{n-1}m(n-m)\right)|\wh A(-n)|^2\\
   &=\frac16\sum_{n=2}^{\infty}(n^3-n)|\wh A(-n)|^2
\end{align*}
The series $\sum_{n=2}^{\infty}(n^3-n)|\wh A(-n)|^2$ is the base‑point Weil‑Petersson quadratic form.   
Since $n^3-n$ is equivalent to $n^3$ for $n\geq2$, finiteness of the series is equivalent to $H^{3/2}$ regularity modulo the frequencies $-1,0,1$. Hence, the tangent map admits a continuous extension  from smooth real vector fields to the Weil–Petersson tangent space $H^{3/2}(S^1)$ taking values in $\mathrm{HS}(H^{1/2}_+,H^{1/2}_-)$. 
\end{proof}

\end{document}